\documentclass[12pt,oneside,reqno]{amsart}
\usepackage{array}
\usepackage{amssymb,tikz,amsmath,bm,color,graphicx,hyperref,anysize}
\usepackage{tikz-cd}
\usepackage{multicol}
\usepackage{mathtools}
\usepackage{subcaption}
\usetikzlibrary{circuits.logic.US,circuits.logic.IEC,fit}

\usetikzlibrary{cd}
\usepackage{enumitem}
\usepackage{newtxtext,newtxmath}
\usetikzlibrary{arrows.meta, decorations.markings}
\usepackage{tikz-3dplot}
\usepackage{tcolorbox}
\tcbuselibrary{skins, breakable}
\usetikzlibrary{arrows.meta,positioning}
\usepackage{blkarray}
\usetikzlibrary{arrows.meta}
\usepackage[numbers,sort&compress]{natbib}
\theoremstyle{plain}
\newtheorem{theorem}{Theorem}[section]
\newtheorem{lemma}[theorem]{Lemma}

\newenvironment{solution*}{%
  \par\noindent\textbf{Solution. }\normalfont
}{\qed}
\theoremstyle{definition}
\newtheorem{example}{Example}[section]
\newtheorem{definition}[theorem]{Definition}
\theoremstyle{remark}

\newtheorem{algorithm}{Algorithm}

\theoremstyle{nonitalic}
\newcommand{\St}{\operatorname{St}}
\newcommand{\Int}{\operatorname{Int}}

\usetikzlibrary{3d}
\usetikzlibrary{shapes.geometric, arrows}
\tikzstyle{startstop} = [rectangle, rounded corners, minimum width=3cm, minimum height=1cm,text centered, draw=black, fill=red!20]
\tikzstyle{process} = [rectangle, minimum width=3cm, minimum height=1cm, text centered, draw=black, fill=blue!10]
\tikzstyle{decision} = [diamond, minimum width=3cm, minimum height=1cm, text centered, draw=black, fill=green!20]
\tikzstyle{arrow} = [thick,->,>=stealth]
\usetikzlibrary{calc}

\makeatletter
\def\@setcopyright{}
\def\serieslogo@{}
\makeatother
\markboth{\small{Sanjay Mishra}}{\small{running head}}
\tikzset{
  mid arrow/.style={
    very thick,
    postaction={decorate},
    decoration={markings,
      mark=at position 0.5 with {\arrow{Stealth}}}
  },
  mid arrow rev/.style={
    very thick,
    postaction={decorate},
    decoration={markings,
      mark=at position 0.5 with {\arrow{Stealth[reversed]}}}
  }
}
\begin{document}
\marginsize{2.54 cm}{2.54 cm}{2.54 cm}{2.54 cm}
\author{Sanjay Mishra}
\address{Department of Mathematics \newline \indent Amity School of Applied Sciences, Amity University Lucknow Campus, UP India}
\email{drsmishraresearch@gmail.com}
\title[Subdivision of Simplicial Complex]{Subdivision of Simplicial Complex}
\begin{abstract}
This paper provides a self-contained exploration of subdivisions of simplicial complexes, with emphasis on barycentric subdivision. We present formal definitions of subdivisions, show how the realization of a complex is preserved under subdivision, and illustrate these concepts with explicit examples and detailed diagrams. The paper develops the general method of constructing subdivisions by starring from interior points, leading to the standard barycentric and derived subdivisions. We give precise statements and proofs demonstrating that repeated barycentric subdivision reduces the mesh below any prescribed scale, ensuring compatibility with given metrics and enabling applications such as simplicial approximation and homological analysis. Examples and TikZ illustrations clarify the structure of iterated subdivisions for finite complexes, highlighting their geometric and topological properties.
\end{abstract}
\subjclass [2020] {55U05 and 55U10}
\keywords{Simplicial complex, Subdivision and Barycentric subdivision}
\maketitle
\section{Introduction}\label{s:Introduction}

Subdivision of simplicial complexes is a fundamental tool in topology, geometry, and combinatorics. By breaking down a given complex into finer complexes, subdivisions allow for greater flexibility in geometric and algebraic arguments, and they are essential in applications such as simplicial approximation, mesh refinement, and computational topology.

A central role is played by barycentric subdivision, which systematically splits each simplex of a complex into a collection of smaller simplices according to the combinatorics of its faces. This process preserves the geometric realization of the original complex while refining its simplicial structure. Iterated barycentric subdivision generates arbitrarily fine meshes compatible with any chosen metric, making it a cornerstone of both theoretical and computational approaches.

This paper develops the detailed theory of subdivisions—formal definitions, their topological properties, explicit construction procedures (including starring from interior points), and the pivotal fact that repeated subdivision makes all simplices uniformly small. Through explicit examples, diagrams, and rigorous statements, we show how barycentric subdivision is constructed, how it interacts with the geometry and topology of the original complex, and why it underlies major theorems such as simplicial approximation. The aim is to provide a thorough and concrete exposition, suitable for both learners new to subdivisions as well as researchers who need algorithmic and theoretical insight into the process. 

For convenience, throughout this paper we use the term “complex” in place of “simplicial complex” and adopt the following notational conventions. The Euclidean space is denoted by $\mathbb{R}^J$. The generalized Euclidean space, $\mathbb{E}^J$, is defined as the subset of $\mathbb{R}^J$ consisting of all points $(x_\alpha)_{\alpha \in J}$ such that $x_\alpha = 0$ for all but finitely many values of $\alpha$; it is a vector space called the generalized Euclidean space and is endowed with the metric $|x - y| = \max \{|x_{\alpha} - y_{\alpha}|\}_{\alpha \in J}$. A complex is denoted by $\mathcal{K}$, and its geometric realization is written as $|\mathcal{K}|$. The notation $\operatorname{Sd}(\mathcal{K})$ stands for a subdivision of the complex $\mathcal{K}$, while $\operatorname{Bsd}(\mathcal{K})$ refers specifically to its barycentric subdivision. The barycenter of a simplex $\sigma$ is denoted by $b_{\sigma}$.

\section{Subdivision of complex}
Subdivision of a simplicial complex is a process in which the complex is refined into smaller simplices while preserving its geometric realization. This technique allows for breaking down simplices of the original complex into collections of finer simplices that cover the same underlying space.

\begin{definition}[Subdivision of complex]\label{d:Subdivision of complex}
Let $\mathcal{K}$ be geometric complex in $\mathbb{E}^{J}$. A complex $\operatorname{Sd}(\mathcal{K})$ is called subdivision of $\mathcal{K}$ if: 
\begin{enumerate}
    \item For every simplex  $\sigma' \in \operatorname{Sd}(\mathcal{K})$ there exists a $\sigma \in \mathcal{K}$ with $\sigma' \subset \sigma$.
    \item For every simplex $\sigma \in \mathcal{K}$ there exist finitely many simplices $\sigma_{1}', \ldots, \sigma_{n}' \in \operatorname{Sd}(\mathcal{K})$ such that $\sigma = \cup_{i = 1}^{n} \sigma_{i}'$.
\end{enumerate}
\end{definition}
The second condition, together with the first condition of the above definition, leads to the following standard result, which describes how the realization of a complex is preserved under subdivision.

\begin{theorem}[Realization preserved under subdivision]\label{t:Realization preserved under subdivision}
Let $\mathcal{K}$ be a geometric simplicial complex in an ambient Euclidean space and let $\mathcal{L}=\operatorname{Sd}(\mathcal{K})$ be a subdivision of $\mathcal{K}$.
Then
\[
\cup_{\sigma'\in \mathcal{L}}\sigma' \;=\; \cup_{\sigma\in \mathcal{K}}\sigma.
\]
In particular, their geometric realizations determine the same subset of the ambient space, so
\[
|\mathcal{L}| = |\mathcal{K}|
\]
as topological spaces with the subspace topology; more generally, there is a canonical homeomorphism
\[
|\mathcal{L}|\cong |\mathcal{K}|.
\]
\end{theorem}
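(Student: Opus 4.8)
The plan is to reduce everything to a single point-set identity and then read off the topological conclusions from it. The heart of the argument is the equality of the two unions as subsets of the ambient space $\mathbb{E}^{J}$; once that is in hand, the statement $|\mathcal{L}| = |\mathcal{K}|$ and the existence of a canonical homeomorphism follow almost formally, precisely because both realizations are topologized as subspaces of the same ambient space.

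First I would prove the two inclusions separately, each coming directly from one clause of Definition~\ref{d:Subdivision of complex}. For the inclusion $\cup_{\sigma'\in\mathcal{L}}\sigma' \subseteq \cup_{\sigma\in\mathcal{K}}\sigma$, I take a point $x$ lying in some $\sigma'\in\mathcal{L}$; condition (1) supplies a $\sigma\in\mathcal{K}$ with $\sigma'\subset\sigma$, so $x\in\sigma$ and hence $x$ lies in the right-hand union. For the reverse inclusion I take $x$ in some $\sigma\in\mathcal{K}$; condition (2) writes $\sigma = \cup_{i=1}^{n}\sigma_{i}'$ with each $\sigma_{i}'\in\mathcal{L}$, so $x\in\sigma_{i}'$ for some $i$ and therefore $x$ lies in the left-hand union. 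Combining the two inclusions gives the asserted set equality.

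Next I would identify this common set with the two geometric realizations. As point sets one has $|\mathcal{K}| = \cup_{\sigma\in\mathcal{K}}\sigma$ and $|\mathcal{L}| = \cup_{\sigma'\in\mathcal{L}}\sigma'$, so the preceding step shows they are literally the same subset $S\subseteq\mathbb{E}^{J}$. Since both realizations are endowed with the subspace topology inherited from $\mathbb{E}^{J}$, equal underlying sets carry equal topologies; hence $|\mathcal{L}| = |\mathcal{K}|$ as topological spaces, with the identity map $\operatorname{id}_{S}$ serving as the canonical homeomorphism.

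I expect the only genuine subtlety---and the point I would flag as the main obstacle---to concern the choice of topology. For a non-locally-finite complex the intrinsic coherent (weak) topology on a realization can be strictly finer than the subspace topology, so if one insisted on the coherent topology the identity would only be evidently continuous in one direction. The theorem sidesteps this by working throughout with the subspace topology, which makes the identity tautologically a homeomorphism. Should one wish to upgrade to the coherent setting, the two conditions supply exactly the finiteness needed: condition (2) expresses each closed simplex of $\mathcal{K}$ as a finite union of closed simplices of $\mathcal{L}$, while condition (1) places each closed simplex of $\mathcal{L}$ inside a single simplex of $\mathcal{K}$, so the identity restricts to a continuous map on every closed simplex of either complex and is therefore continuous in both directions.
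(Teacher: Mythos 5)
Your proposal is correct and follows essentially the same route as the paper's own proof: both inclusions are derived directly from the two clauses of Definition~\ref{d:Subdivision of complex}, and the topological statement then follows because both realizations carry the subspace topology from the same ambient space. Your closing remark about the coherent (weak) topology on non-locally-finite complexes is a sound extra observation that the paper does not address, but it does not change the argument.
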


\begin{proof}
We prove the equality of unions by showing both inclusions, using only the defining properties of a subdivision.

\medskip
\noindent\emph{Inclusion $\displaystyle \cup_{\sigma'\in \mathcal{L}}\sigma' \subseteq \cup_{\sigma\in \mathcal{K}}\sigma$.}
Let $x \in \cup_{\sigma'\in \mathcal{L}}\sigma'$. Then $x \in \sigma'_0$ for some simplex $\sigma'_0 \in \mathcal{L}$.
By the refinement condition of a subdivision, there exists $\sigma_0 \in \mathcal{K}$ such that $\sigma'_0 \subseteq \sigma_0$.
Hence $x \in \sigma_0 \subseteq \cup_{\sigma\in \mathcal{K}}\sigma$, proving the first inclusion.

\noindent\emph{Inclusion $\displaystyle \cup_{\sigma\in \mathcal{K}}\sigma \subseteq \cup_{\sigma'\in \mathcal{L}}\sigma'$.}
Let $x \in \cup_{\sigma\in \mathcal{K}}\sigma$. Then $x \in \sigma_0$ for some simplex $\sigma_0 \in \mathcal{K}$.
By the subdivision covering condition, there exist finitely many simplices $\sigma'_1,\dots,\sigma'_n \in \mathcal{L}$ such that
\[
\sigma_0 \;=\; \cup_{i=1}^n \sigma'_i.
\]
Therefore $x \in \sigma'_i$ for some $i$, and hence $x \in \cup_{\sigma'\in \mathcal{L}}\sigma'$, proving the second inclusion.

\medskip
Combining the two inclusions yields
\[
\cup_{\sigma'\in \mathcal{L}}\sigma' \;=\; \cup_{\sigma\in \mathcal{K}}\sigma.
\]

\medskip
\noindent\emph{Equality/homeomorphism of realizations.}
When $|\mathcal{K}|$ and $|\mathcal{L}|$ are both regarded as subsets of the same ambient Euclidean space with the subspace topology, the preceding equality of unions shows that $|\mathcal{K}|=|\mathcal{L}|$ as topological subspaces. If, instead, one treats realizations abstractly (not as the identical subset), the identity on the common underlying subset induces a canonical bijection $|\mathcal{L}|\to|\mathcal{K}|$ which is a homeomorphism since both carry the subspace topology from the same ambient space. Thus there is a canonical homeomorphism
\[
|\mathcal{L}| \;\cong\; |\mathcal{K}|.
\]
\end{proof}

For better understanding let us look at an example that shows how the realization is preserved during the subdivision of the complex.

\begin{example}
Let $\mathcal{K}$ be the geometric simplicial complex in $\mathbb{R}^2$ consisting of a single 2–simplex $\triangle ABC$ together with all its faces as
\[\mathcal{K} = \{A, B, C, [AB], [BC], [CA], [ABC]\}\]

Choose a mid point $D$ on the edge $[A,B]$. Define $\mathcal{L}$ to be the simplicial complex whose 2–simplices are $\triangle ADC$ and $\triangle BDC$, together with all their faces as
\[\mathcal{L} = \{A, B, C, D, [AD], [DB], [BC], [CA],  [ADC], [BDC]\}\]

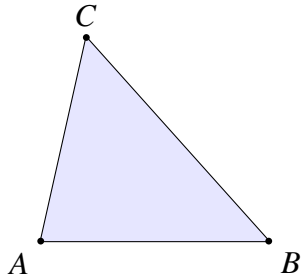
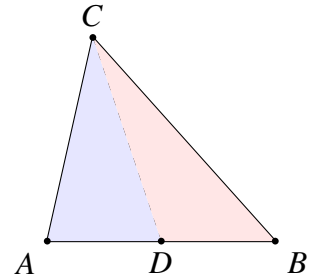
\begin{figure}[h!]
  \centering
  \begin{subfigure}[t]{0.45\textwidth}
    \centering
    \begin{tikzpicture}[scale=3]
      \coordinate (A) at (0,0);
      \coordinate (B) at (1,0);
      \coordinate (C) at (0.2,0.9);

      \draw[thick] (A)--(B)--(C)--cycle;


      \fill[blue!10] (A)--(B)--(C)--cycle;

      \fill (A) circle (0.015);
      \fill (B) circle (0.015);
      \fill (C) circle (0.015);

      \node[below left]  at (A) {$A$};
      \node[below right] at (B) {$B$};
      \node[above]       at (C) {$C$};
    \end{tikzpicture}
    \caption{Complex $\mathcal{K}$}
    \label{fig:Complex K$}
  \end{subfigure}
  \hfill
  \begin{subfigure}[t]{0.45\textwidth}
    \centering
    \begin{tikzpicture}[scale=3]
      \coordinate (A) at (0,0);
      \coordinate (B) at (1,0);
      \coordinate (C) at (0.2,0.9);
      \coordinate (D) at ($(A)!0.5!(B)$);

      \draw[thick] (A)--(B)--(C)--cycle;

      \draw[thick, dashed] (D)--(C);

      \fill[blue!10] (A)--(D)--(C)--cycle;
      \fill[red!10]  (B)--(D)--(C)--cycle;

      \fill (A) circle (0.015);
      \fill (B) circle (0.015);
      \fill (C) circle (0.015);
      \fill (D) circle (0.015);

      \node[below left]  at (A) {$A$};
      \node[below right] at (B) {$B$};
      \node[above]       at (C) {$C$};
      \node[below]       at (D) {$D$};
    \end{tikzpicture}
    \caption{Including mid point $D$ of edge $[AB]$ for sub-division complex $\mathcal{L}$ of the complex $\mathcal{K}$}
    \label{fig:Including mid point D of edge AB for sub-division complex L of the complex K$}
  \end{subfigure}

  \caption{Sub-division of complex $\mathcal{K}$  into the complex $\mathcal{L}$}
  \label{fig:Sub-division of complex K  into the complex L}
\end{figure}

See in the figure \eqref{fig:Sub-division of complex K  into the complex L}. 
Let us examine how this realization is preserved during the subdivision of the complex.

We claim $\cup_{\sigma'\in \mathcal{L}} \sigma' \;=\; \cup_{\sigma\in \mathcal{K}} \sigma \;=\; \triangle ABC$. First, every simplex of $\mathcal{L}$ lies in a simplex of $\mathcal{K}$:
indeed, $\triangle ADC \subset \triangle ABC$, $\triangle BDC \subset \triangle ABC$, and each edge/vertex of $\mathcal{L}$ is contained in an edge/vertex of $\mathcal{K}$.
Hence
\[
\cup_{\sigma'\in \mathcal{L}} \sigma' \;\subseteq\; \cup_{\sigma\in \mathcal{K}} \sigma \;=\; \triangle ABC.
\]

Conversely, $\triangle ABC$ decomposes as the union of the two triangles $\triangle ADC$ and $\triangle BDC$, whose interiors are disjoint and which meet along the segment $[D,C]$.
Thus
\[
\triangle ABC \;=\; \triangle ADC \,\cup\, \triangle BDC \;\subseteq\; \cup_{\sigma'\in \mathcal{L}} \sigma',
\]
so $\cup_{\sigma\in \mathcal{K}} \sigma \subseteq \cup_{\sigma'\in \mathcal{L}} \sigma'$.
Therefore the unions coincide $\cup_{\sigma'\in \mathcal{L}} \sigma' \;=\; \cup_{\sigma\in \mathcal{K}} \sigma \;=\; \triangle ABC$.

Consequence for realizations.
Viewing both $|\mathcal{K}|$ and $|\mathcal{L}|$ as subspaces of $\mathbb{R}^2$ with the subspace topology, the above equality of unions implies $|\mathcal{L}| \;=\; |\mathcal{K}|$. Equivalently, there is a canonical homeomorphism $|\mathcal{L}| \cong |\mathcal{K}|$ induced by the identity on $\triangle ABC$.
\end{example}

\begin{theorem}[Transitivity of subdivision]\label{t:Transitivity of subdivision}
If $\mathcal{K}''$ is a sub-division of $\mathcal{K}'$ and if $\mathcal{K}'$ is a sub-division of  $\mathcal{K}$, then $\mathcal{K}''$ is a sub-division of $\mathcal{K}$.
\end{theorem}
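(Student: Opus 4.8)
The plan is to verify directly that $\mathcal{K}''$ satisfies the two defining conditions of a subdivision of $\mathcal{K}$ from Definition~\ref{d:Subdivision of complex}, using the corresponding properties we already possess for the pairs $(\mathcal{K}'',\mathcal{K}')$ and $(\mathcal{K}',\mathcal{K})$. Since $\mathcal{K}''$ is a simplicial complex by hypothesis, no reconstruction of its simplicial structure is required; the entire argument reduces to chaining the refinement conditions and composing the covering conditions.

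For the refinement condition, I would take an arbitrary simplex $\rho\in\mathcal{K}''$. Because $\mathcal{K}''$ subdivides $\mathcal{K}'$, there is a simplex $\tau\in\mathcal{K}'$ with $\rho\subset\tau$; because $\mathcal{K}'$ subdivides $\mathcal{K}$, there is a simplex $\sigma\in\mathcal{K}$ with $\tau\subset\sigma$. Transitivity of set inclusion then yields $\rho\subset\sigma$, which is precisely what the first condition demands. This step is immediate.

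For the covering condition, I would fix a simplex $\sigma\in\mathcal{K}$ and apply the covering property of $\mathcal{K}'$ over $\mathcal{K}$ to write $\sigma=\cup_{j=1}^{k}\sigma_j'$ as a finite union of simplices $\sigma_j'\in\mathcal{K}'$. Applying the covering property of $\mathcal{K}''$ over $\mathcal{K}'$ to each $\sigma_j'$ expresses it as a finite union $\sigma_j'=\cup_{i=1}^{m_j}\rho_{j,i}$ of simplices $\rho_{j,i}\in\mathcal{K}''$. Substituting and collecting indices gives $\sigma=\cup_{j,i}\rho_{j,i}$, a union indexed by the set $\{(j,i):1\le j\le k,\ 1\le i\le m_j\}$.

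The only point requiring genuine care—and the place I would flag as the main obstacle, though a mild one—is confirming that this double union remains \emph{finite}: a finite union of finite unions is indexed by a finite set, so the resulting family of simplices of $\mathcal{K}''$ covering $\sigma$ is finite, exactly as the definition requires. With both conditions verified, $\mathcal{K}''$ is a subdivision of $\mathcal{K}$.
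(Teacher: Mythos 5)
Your proposal is correct and follows essentially the same route as the paper's own proof: chaining the refinement condition through $\mathcal{K}''\subseteq\mathcal{K}'\subseteq\mathcal{K}$ and composing the two finite covering decompositions into a finite double union. The finiteness point you flag is indeed the only detail worth a remark, and your observation that a finite union of finite unions is finite settles it exactly as the paper does.
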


\begin{proof}
If $\mathcal{K}''$ is a sub-division of $\mathcal{K}'$ and if $\mathcal{K}'$ is a sub-division of  $\mathcal{K}$, then $\mathcal{K}''$ is a sub-division of $\mathcal{K}$ can be by directly by the definition \eqref{d:Subdivision of complex}.

Let $\tau'' \in \mathcal{K}''$. Since $\mathcal{K}''$ is a subdivision of $\mathcal{K}'$, there exists $\tau' \in \mathcal{K}'$ with $\tau'' \subseteq \tau'$.
Since $\mathcal{K}'$ is a subdivision of $\mathcal{K}$, there exists $\sigma \in \mathcal{K}$ with $\tau' \subseteq \sigma$.
Thus $\tau'' \subseteq \sigma$, establishing first condition for $\mathcal{K}''$ relative to $\mathcal{K}$.

Let $\sigma \in \mathcal{K}$.
Because $\mathcal{K}'$ is a subdivision of $\mathcal{K}$, there exist finitely many simplices $\tau'_1,\dots,\tau'_r \in \mathcal{K}'$ such that
\[
\sigma \;=\; \bigcup_{j=1}^{r} \tau'_j.
\]
For each $j$, since $\mathcal{K}''$ is a subdivision of $\mathcal{K}'$, there exist finitely many simplices $\tau''_{j,1},\dots,\tau''_{j,s_j} \in \mathcal{K}''$ with
\[
\tau'_j \;=\; \bigcup_{i=1}^{s_j} \tau''_{j,i}.
\]
Therefore
\[
\sigma \;=\; \bigcup_{j=1}^{r} \tau'_j
\;=\; \bigcup_{j=1}^{r} \ \bigcup_{i=1}^{s_j} \tau''_{j,i},
\]
a finite union of simplices in $\mathcal{K}''$.
Hence second condition holds for $\mathcal{K}''$ relative to $\mathcal{K}$.

Since both conditions are satisfied, therefore $\mathcal{K}''$ is a subdivision of $\mathcal{K}$.
\end{proof}

\begin{definition}[Induced sub-division of a sub-complex]\label{d:[Induced sub-division of a sub-complex}
Let $\mathcal{K}'$ be a subdivision of a geometric simplicial complex $\mathcal{K}$, and let $\mathcal{K}_0$ be a subcomplex of $\mathcal{K}$.
The complex
\[
\mathcal{K}'\big|_{\mathcal{K}_0}\;=\;\bigl\{\;\sigma'\in \mathcal{K}' \;:\; \sigma'\subseteq \,|\mathcal{K}_0| \;\bigr\}.
\]
is a sub-division of $\mathcal{K}_0$, called the sub-division of $\mathcal{K}_0$ induced by $\mathcal{K}'$.
\end{definition}

\begin{example}[Induced sub-division on a sub-complex]\label{eg:Induced sub-division on a sub-complex}
Let $\mathcal{K}$ be the geometric simplicial complex in $\mathbb{R}^2$ consisting of a single triangle $\triangle ABC$ together with all its faces.

\[\mathcal{K} = \{A, B, C, [AB], [BC], [CA], [ABC]\}\]

Form a subdivision $\mathcal{K}'$ by choosing a point $D$ on the edge $[A,B]$ and replacing the facet $\triangle ABC$ by the two triangles $\triangle ADC$ and $\triangle BDC$, including all their faces.

\[\mathcal{K}' = \{A, B, C, D, [AD], [DB], [BC], [CA],  [ADC], [BDC]\}\]

Let $\mathcal{K}_0\subset \mathcal{K}$ be the edge subcomplex generated by $[A,B]$, i.e., $\mathcal{K}_0=\{A, B, [AB]\}$.

The induced subdivision of $\mathcal{K}_0$ from $\mathcal{K}'$ is
\[
\mathcal{K}'\big|_{\mathcal{K}_0}
\;=\;
\bigl\{\,\sigma'\in \mathcal{K}' \;:\; \sigma' \subseteq |\mathcal{K}_0| \,\bigr\}.
\]
Since $|\mathcal{K}_0|=[AB]$, the simplices of $\mathcal{K}'$ lying in $[AB]$ are precisely the edges $[AD]$, $[DB]$ and the vertices $A,B,D$.
Hence
\[
\mathcal{K}'\big|_{\mathcal{K}_0}
\;=\;
\bigl\{A, B, D, [AD], [BD]\bigr\},
\]
which is a subdivision of the edge $[AB]$ obtained by inserting the vertex $D$.
This induced complex refines $\mathcal{K}_0$ simplexwise, and each simplex of $\mathcal{K}_0$ is a finite union of simplices from $\mathcal{K}'\big|_{\mathcal{K}_0}$ (notably, $[AB]=[AD]\cup[DB]$), so $\mathcal{K}'\big|_{\mathcal{K}_0}$ is a subdivision of $\mathcal{K}_0$. See the figure \eqref{fig:Induced su-division on a sub-complex}.
\begin{figure}[h!]
  \centering
  \begin{subfigure}[t]{0.45\textwidth}
    \centering
    \begin{tikzpicture}[scale=3]
      \coordinate (A) at (0,0);
      \coordinate (B) at (1,0);
      \coordinate (C) at (0.2,0.9);

      \draw[thick] (A)--(B)--(C)--cycle;


      \fill[blue!10] (A)--(B)--(C)--cycle;

      \fill (A) circle (0.015);
      \fill (B) circle (0.015);
      \fill (C) circle (0.015);

      \node[below left]  at (A) {$A$};
      \node[below right] at (B) {$B$};
      \node[above]       at (C) {$C$};
    \end{tikzpicture}
    \caption{Complex $\mathcal{K}$}
    \label{fig:Complex K$}
  \end{subfigure}
\begin{subfigure}[t]{0.45\textwidth}
    \centering
    \begin{tikzpicture}[scale=3]
      \coordinate (A) at (0,0);
      \coordinate (B) at (1,0);

      \draw[thick] (A)--(B)--cycle;


      \fill[blue!10] (A)--(B)--cycle;

      \fill (A) circle (0.015);
      \fill (B) circle (0.015);

      \node[below left]  at (A) {$A$};
      \node[below right] at (B) {$B$};
    \end{tikzpicture}
    \caption{Sub-complex $\mathcal{K}_{0} \subset{\mathcal{K}}$}
    \label{fig:Complex K0}
  \end{subfigure}
 
  \hfill
  \begin{subfigure}[t]{0.45\textwidth}
    \centering
    \begin{tikzpicture}[scale=3]
      \coordinate (A) at (0,0);
      \coordinate (B) at (1,0);
      \coordinate (C) at (0.2,0.9);
      \coordinate (D) at ($(A)!0.5!(B)$);

      \draw[thick] (A)--(B)--(C)--cycle;

      \draw[thick, dashed] (D)--(C);

      \fill[blue!10] (A)--(D)--(C)--cycle;
      \fill[red!10]  (B)--(D)--(C)--cycle;

      \fill (A) circle (0.015);
      \fill (B) circle (0.015);
      \fill (C) circle (0.015);
      \fill (D) circle (0.015);

      \node[below left]  at (A) {$A$};
      \node[below right] at (B) {$B$};
      \node[above]       at (C) {$C$};
      \node[below]       at (D) {$D$};
    \end{tikzpicture}
    \caption{Sub-division $\mathcal{K}'$ of $\mathcal{K}$}
    \label{fig:Including mid point D of edge AB for sub-division complex L of the complex K$}
  \end{subfigure}
\hfill
  \begin{subfigure}[t]{0.45\textwidth}
    \centering
    \begin{tikzpicture}[scale=3]
      \coordinate (A) at (0,0);
      \coordinate (B) at (1,0);
      \coordinate (D) at ($(A)!0.5!(B)$);
\draw[blue, thick] (A) -- (D);
\draw[red, thick] (D) -- (B);
      \fill (A) circle (0.015);
      \fill (B) circle (0.015);
      \fill (D) circle (0.015);

      \node[below left]  at (A) {$A$};
      \node[below right] at (B) {$B$};
     \node[below]       at (D) {$D$};
    \end{tikzpicture}
    \caption{Sub-division $\mathcal{K}'\big|_{\mathcal{K}_0}$ of $\mathcal{K}_{0}$ induced by $\mathcal{K}'$}
    \label{fig:Including mid point D of edge AB for sub-division complex L of the complex K$}
  \end{subfigure}
  \caption{Induced sub-division $\mathcal{K}'\big|_{\mathcal{K}_0}$ of sub-complex $\mathcal{K}_{0}$ of complex $\mathcal{K}$ induced by sub-division $\mathcal{K}'$ of $\mathcal{K}$}
  \label{fig:Induced su-division on a sub-complex}
\end{figure}
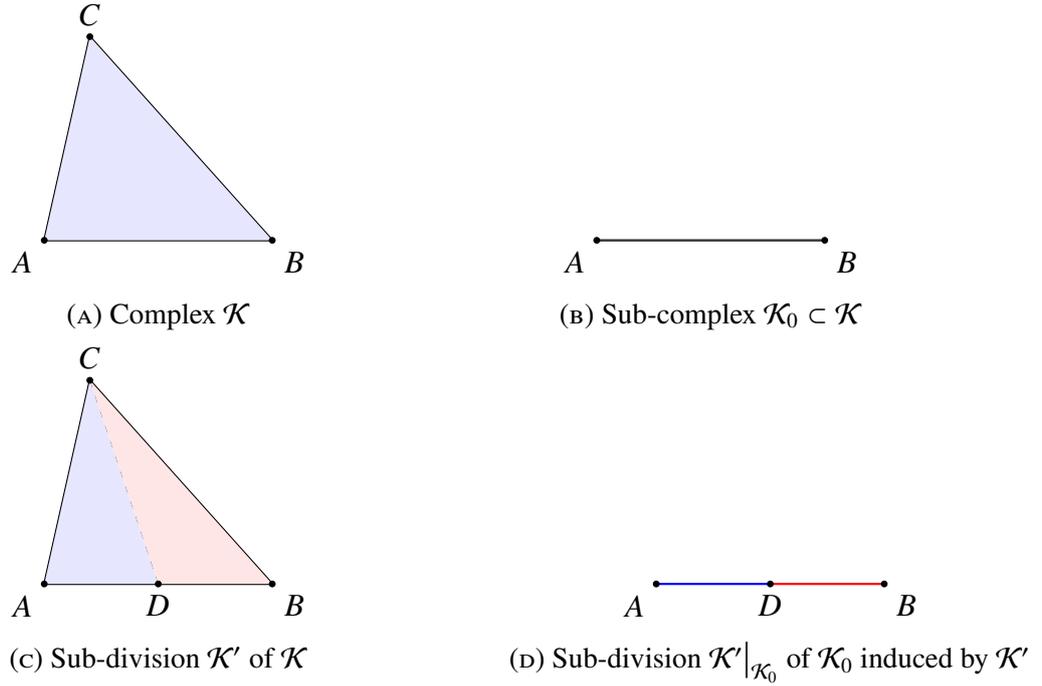
\end{example}

The next lemma captures a local containment property of stars under subdivision: every new vertex in a subdivided complex has its open star contained in the open star of a suitable original vertex, namely any vertex of the simplex that “carries” the new vertex. Moreover, if $\sigma$ is the simplex of $\mathcal{K}$ with $w \in \operatorname{Int}\sigma$ (the carrier of $w$), then this inclusion holds precisely for those $v$ that are vertices of $\sigma$.

\begin{definition}[Open star of a vertex of complex]\label{d:Open star of a vertex of complex}
Let $\mathcal{K}$ be a geometric simplicial complex and let $v$ be a vertex of $\mathcal{K}$.
The open star of $v$ in $\mathcal{K}$ is
\[
\St(v,\mathcal{K})
\;=\;
\bigcup \bigl\{\, \Int(\sigma) \;:\; \sigma \in \mathcal{K},\ v \in \sigma \,\bigr\},
\]
i.e., the union of the relative interiors of all simplices of $\mathcal{K}$ that contain $v$.
\end{definition}


\begin{example}[Open star in a triangle]\label{eg:Open star in a triangle}
Let $\mathcal{K}$ be the geometric simplicial complex in $\mathbb{R}^2$ consisting of the triangle $[ABC]$ together with all its faces:
\[
\mathcal{K}=\bigl\{[ABC],\ [AB],\ [BC],\ [CA],\ A,\ B,\ C \bigr\}.
\]
For the vertex $A$, the open star $\St(A,\mathcal{K})$ is the union of the relative interiors of all simplices of $\mathcal{K}$ that contain $A$, namely
\[
\St(A,\mathcal{K})
\;=\;
\operatorname{Int}[ABC]\ \cup\ \operatorname{Int}[AB]\ \cup\ \operatorname{Int}[AC]\ \cup\ \{A\}.
\]
Similarly,
\begin{gather*}
\St(B,\mathcal{K})=\operatorname{Int}[ABC]\cup \operatorname{Int}[AB]\cup \operatorname{Int}[BC]\cup \{B\},\\ 
\St(C,\mathcal{K})=\operatorname{Int}[ABC]\cup \operatorname{Int}[BC]\cup \operatorname{Int}[CA]\cup \{C\}.   \end{gather*}

In words, the open star of a vertex in a single 2–simplex is the interior of the triangle together with the interiors of the two edges incident to that vertex and the vertex itself. See the figure \eqref{fig:stars-ABC}.

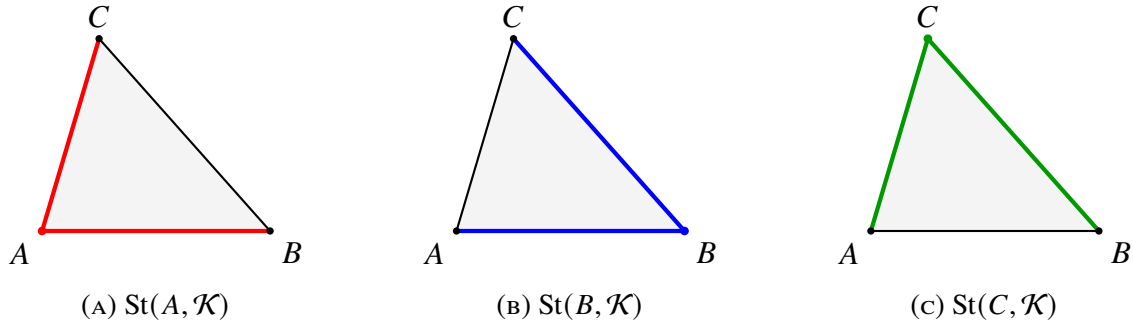
\begin{figure}[h!]
\centering

\begin{subfigure}[t]{0.32\textwidth}
\centering
\begin{tikzpicture}[scale=3]
  \coordinate (A) at (0,0);
  \coordinate (B) at (1,0);
  \coordinate (C) at (0.25,0.85);

  \fill[gray!9] (A)--(B)--(C)--cycle;
  \draw[thick] (A)--(B)--(C)--cycle;

  \draw[line width=1.6pt, red] (A)--(B);
  \draw[line width=1.6pt, red] (A)--(C);

  \fill[red] (A) circle (0.019);
  \fill (B) circle (0.016);
  \fill (C) circle (0.016);

  \node[below left]  at (A) {$A$};
  \node[below right] at (B) {$B$};
  \node[above]       at (C) {$C$};
\end{tikzpicture}
\caption{$\St(A,\mathcal{K})$}
\label{fig:star-A}
\end{subfigure}
\hfill
\begin{subfigure}[t]{0.32\textwidth}
\centering
\begin{tikzpicture}[scale=3]
  \coordinate (A) at (0,0);
  \coordinate (B) at (1,0);
  \coordinate (C) at (0.25,0.85);

  \fill[gray!9] (A)--(B)--(C)--cycle;
  \draw[thick] (A)--(B)--(C)--cycle;

  \draw[line width=1.6pt, blue] (B)--(A);
  \draw[line width=1.6pt, blue] (B)--(C);

  \fill (A) circle (0.016);
  \fill[blue] (B) circle (0.019);
  \fill (C) circle (0.016);

  \node[below left]  at (A) {$A$};
  \node[below right] at (B) {$B$};
  \node[above]       at (C) {$C$};
\end{tikzpicture}
\caption{$\St(B,\mathcal{K})$}
\label{fig:star-B}
\end{subfigure}
\hfill
\begin{subfigure}[t]{0.32\textwidth}
\centering
\begin{tikzpicture}[scale=3]
  \coordinate (A) at (0,0);
  \coordinate (B) at (1,0);
  \coordinate (C) at (0.25,0.85);

  \fill[gray!9] (A)--(B)--(C)--cycle;
  \draw[thick] (A)--(B)--(C)--cycle;

  \draw[line width=1.6pt, green!60!black] (C)--(A);
  \draw[line width=1.6pt, green!60!black] (C)--(B);

  \fill (A) circle (0.016);
  \fill (B) circle (0.016);
  \fill[green!60!black] (C) circle (0.019);

  \node[below left]  at (A) {$A$};
  \node[below right] at (B) {$B$};
  \node[above]       at (C) {$C$};
\end{tikzpicture}
\caption{$\St(C,\mathcal{K})$}
\label{fig:star-C}
\end{subfigure}

\caption{Open stars for vertices in the triangle complex $\mathcal{K}$}
\label{fig:stars-ABC}
\end{figure}

\end{example}

\begin{lemma}[Star inclusion under subdivision]\label{l:Star inclusion under subdivision}
Let $\mathcal{K}'$ be subdivision of $\mathcal{K}$. Then for each vertex $w$ of $\mathcal{K}'$, there is a vertex $v$ of $\mathcal{K}$ such that $\operatorname{St}(w, \mathcal{K}') \subset \operatorname{St}(v, \mathcal{K})$. Indeed, if $\sigma$ is the simplex of $\mathcal{K}$ such that $w \in \operatorname{Int} \sigma$, then this inclusion holds precisely when $v$ is a vertex of $\sigma$. 
\end{lemma}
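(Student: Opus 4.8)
The plan is to reformulate open stars in terms of carriers and then exploit the compatibility of carriers across the two complexes. Recall that the relative interiors $\{\Int\rho : \rho\in\mathcal{K}\}$ partition $|\mathcal{K}|$, so every point $x$ lies in a unique open cell; I will write $\operatorname{carr}_{\mathcal{K}}(x)$ for the corresponding simplex. Directly from Definition \ref{d:Open star of a vertex of complex} one obtains the characterization
\[
x\in\St(v,\mathcal{K}) \iff v \text{ is a vertex of } \operatorname{carr}_{\mathcal{K}}(x),
\]
and likewise for $\mathcal{K}'$. I would record this equivalence first, since it converts the set inclusion we must prove into a purely combinatorial statement about carriers.

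The technical heart is a carrier-compatibility lemma: for every simplex $\tau'\in\mathcal{K}'$, if $\rho$ denotes the smallest simplex of $\mathcal{K}$ containing $\tau'$ (which exists because the simplices of $\mathcal{K}$ containing $\tau'$ are closed under intersection, each intersection being a common face, and because property (1) of Definition \ref{d:Subdivision of complex} guarantees at least one such simplex), then $\Int\tau'\subseteq\Int\rho$. I would prove this by a barycentric-coordinate positivity argument: writing an interior point of $\tau'$ as a strictly positive convex combination of the vertices $w_i$ of $\tau'$ and expressing each $w_i$ in barycentric coordinates relative to $\rho$, a vanishing $\rho$-coordinate at the point would force that coordinate to vanish at every $w_i$, placing all of $\tau'$ in a proper face of $\rho$ and contradicting minimality. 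Consequently every point of $\Int\tau'$ has carrier exactly $\rho$ in $\mathcal{K}$. I expect this step to be the main obstacle, since it is where the geometry of subdivisions, rather than merely the set-theoretic covering axioms, genuinely enters.

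With these tools the forward inclusion is quick. Let $\sigma=\operatorname{carr}_{\mathcal{K}}(w)$ be the carrier of $w$, i.e. the smallest simplex of $\mathcal{K}$ with $w\in\Int\sigma$, and let $v$ be any vertex of $\sigma$. Given $x\in\St(w,\mathcal{K}')$, I choose $\tau'\in\mathcal{K}'$ with $w\in\tau'$ and $x\in\Int\tau'$, and let $\rho$ be the smallest simplex of $\mathcal{K}$ containing $\tau'$. By the compatibility lemma $\operatorname{carr}_{\mathcal{K}}(x)=\rho$; moreover $w\in\tau'\subseteq\rho$ forces $\sigma\preceq\rho$ (as $\sigma$ is the minimal simplex of $\mathcal{K}$ containing $w$, it is a face of any simplex of $\mathcal{K}$ that contains $w$), so $v$ is a vertex of $\rho=\operatorname{carr}_{\mathcal{K}}(x)$. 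By the carrier characterization, $x\in\St(v,\mathcal{K})$, giving $\St(w,\mathcal{K}')\subseteq\St(v,\mathcal{K})$.

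Finally, for the sharpness clause I would exhibit a single witness. The vertex $w$ itself lies in $\St(w,\mathcal{K}')$, being the relative interior of the $0$-simplex $\{w\}\in\mathcal{K}'$, and its carrier in $\mathcal{K}$ is exactly $\sigma$; hence $w\in\St(v,\mathcal{K})$ if and only if $v$ is a vertex of $\sigma$. Therefore, whenever $v$ is not a vertex of $\sigma$ the point $w$ certifies that the inclusion fails, and combined with the previous paragraph this shows the inclusion holds precisely for the vertices $v$ of $\sigma$, completing the proof.
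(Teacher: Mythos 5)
Your proposal is correct, and although its two-step skeleton (inclusion for every vertex of the carrier, then sharpness) matches the paper's, the technical content is genuinely different — and in fact more watertight. The paper argues directly: given $x\in\St(w,\mathcal{K}')$ it takes $\tau'\in\mathcal{K}'$ with $x\in\Int\tau'$, passes to \emph{some} $\tau\in\mathcal{K}$ with $\tau'\subseteq\tau$, asserts $w\in\Int\tau$ ``because interiors are preserved under refinement at interior points,'' and concludes via $\Int\tau'\subseteq\sigma\subset\St(v,\mathcal{K})$. Both of those steps are shaky as written: an arbitrary $\tau\supseteq\tau'$ need not contain $w$ in its interior (only a minimal containing simplex does), and the closed simplex $\sigma$ is \emph{not} contained in the open star $\St(v,\mathcal{K})$, so the final containment is literally false and must instead go through $\Int\tau'\subseteq\Int\sigma$. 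Your carrier machinery supplies exactly these repairs: the compatibility lemma $\Int\tau'\subseteq\Int\rho$ for the \emph{minimal} $\rho\supseteq\tau'$, proved by the barycentric-coordinate positivity argument, pins down $\operatorname{carr}_{\mathcal{K}}(x)$ precisely, and the equivalence $x\in\St(v,\mathcal{K})\iff v$ is a vertex of $\operatorname{carr}_{\mathcal{K}}(x)$ turns both directions of the lemma into carrier combinatorics. For sharpness, the paper picks a point $x\in\Int\sigma$ ``sufficiently close to $w$,'' implicitly using that the open star is a neighborhood of $w$ inside $\Int\sigma$; your witness $x=w$ itself (the carrier of $w$ in $\mathcal{K}$ being $\sigma$, so $w\in\St(v,\mathcal{K})$ iff $v$ is a vertex of $\sigma$) is cleaner and needs no limiting argument. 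In short, the paper's route buys brevity; yours buys rigor at the cost of one auxiliary lemma, and would stand as a complete proof where the paper's version needs patching.
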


\begin{proof}
Given a geometric simplicial complex $\mathcal{K}$ and a subdivision $\mathcal{K}'$ of $\mathcal{K}$, together with an arbitrary vertex $w$ of $\mathcal{K}'$ and its carrier simplex $\sigma$ in $\mathcal{K}$ characterized by $w\in \operatorname{Int}\sigma$. To prove, there exists a vertex $v$ of $\mathcal{K}$ such that $\operatorname{St}(w,\mathcal{K}')\subset \operatorname{St}(v,\mathcal{K})$; moreover, the full set of such vertices is $\mathrm{Vert}(\sigma)$, i.e., the inclusion holds precisely for those $v$ that are vertices of the carrier simplex $\sigma$.

Let $\mathcal{K}'$ be a subdivision of $\mathcal{K}$ and let $w$ be a vertex of $\mathcal{K}'$.
By the defining property of a subdivision, there exists a unique simplex $\sigma \in \mathcal{K}$ such that $w \in \operatorname{Int}\sigma$; this $\sigma$ is called the carrier of $w$ in $\mathcal{K}$.
Write $\sigma=[v_0,\dots,v_m]$ with vertices $v_i\in \mathrm{Vert}(\mathcal{K})$.

\begin{description}
 \item[Step 1: Star containment for any vertex of the carrier]
Fix a vertex $v$ of $\sigma$.
We claim $\operatorname{St}(w,\mathcal{K}') \subset \operatorname{St}(v,\mathcal{K})$.
Let $x \in \operatorname{St}(w,\mathcal{K}')$.
By definition of open star, there exists a simplex $\tau' \in \mathcal{K}'$ with $w \in \operatorname{Int}\tau'$ and $x \in \operatorname{Int}\tau'$.
Since $\mathcal{K}'$ subdivides $\mathcal{K}$, there exists $\tau \in \mathcal{K}$ such that $\tau'\subseteq \tau$.
Moreover, $w \in \operatorname{Int}\tau' \subseteq \tau$ implies $w \in \operatorname{Int}\tau$ because interiors are preserved under refinement at interior points.
By uniqueness of the carrier simplex, $\tau=\sigma$.
Hence $\tau' \subseteq \sigma$.
Because $v$ is a vertex of $\sigma$, every point of $\operatorname{Int}\tau'$ lies in a simplex of $\mathcal{K}$ that contains $v$ (indeed, $\sigma$ itself contains both $v$ and $\tau'$).
Thus $x \in \operatorname{Int}\tau' \subseteq \sigma \subset \operatorname{St}(v,\mathcal{K})$.
Since $x$ was arbitrary, $\operatorname{St}(w,\mathcal{K}') \subset \operatorname{St}(v,\mathcal{K})$.

\item[Step 2: Characterization of all such vertices]
Suppose $v\in \mathrm{Vert}(\mathcal{K})$ satisfies $\operatorname{St}(w,\mathcal{K}') \subset \operatorname{St}(v,\mathcal{K})$.
We show that $v$ must be a vertex of the carrier $\sigma$.
Pick a point $x \in \operatorname{Int}\sigma$ sufficiently close to $w$ so that $x \in \operatorname{St}(w,\mathcal{K}')$ (this is possible because $w\in \operatorname{Int}\sigma$ and $\sigma$ is the unique carrier).
By the assumed inclusion, $x \in \operatorname{St}(v,\mathcal{K})$.
Hence there exists a simplex $\rho \in \mathcal{K}$ containing $v$ with $x \in \operatorname{Int}\rho$.
But $x \in \operatorname{Int}\sigma \cap \operatorname{Int}\rho$ implies $\rho=\sigma$ by uniqueness of the simplex whose interior contains $x$.
Therefore $v$ is a vertex of $\sigma$.
\end{description}

Combining Steps 1 and 2, we conclude:
for every vertex $w$ of $\mathcal{K}'$ with carrier $\sigma\in \mathcal{K}$, the inclusion $\operatorname{St}(w,\mathcal{K}') \subset \operatorname{St}(v,\mathcal{K})$ holds for each vertex $v$ of $\sigma$, and it holds only for those $v$.
This proves the lemma.
\end{proof}

\begin{example}[Star containment under a triangle subdivision]
Let $\mathcal{K}$ be the geometric simplicial complex in $\mathbb{R}^2$ consisting of a single 2-simplex together with all its faces,
\[
\mathcal{K}=\{A, B, C,\ [AB], [BC], [CA],\ [ABC]\}.
\]
Let $\sigma=[ABC]$.
Form a subdivision $\mathcal{K}'$ (See in figure \eqref{fig:Complex and its sub-division}) by choosing a point $D\in \operatorname{Int}\sigma$ (e.g., the barycenter) and coning to the vertices:
\[
\mathcal{K}'
=
\Bigl\{
A,B,C,D,\ 
[AB], [BC], [CA], [AD], [BD], [CD],\ 
[DAB], [DBC], [DCA]
\Bigr\}.
\]

\begin{figure}[h!]
\centering

\begin{subfigure}[t]{0.47\textwidth}
\centering
\begin{tikzpicture}[scale=3]

  \coordinate (A) at (0,0);
  \coordinate (B) at (1,0);
  \coordinate (C) at (0.25,0.85);

  \fill[gray!10] (A)--(B)--(C)--cycle;

  \draw[thick] (A)--(B);
  \draw[thick] (B)--(C);
  \draw[thick] (C)--(A);

  \fill (A) circle (0.016);
  \fill (B) circle (0.016);
  \fill (C) circle (0.016);

  \node[below left]  at (A) {$A$};
  \node[below right] at (B) {$B$};
  \node[above]       at (C) {$C$};

\end{tikzpicture}
\caption{Original complex $\mathcal{K}$.}
\label{fig:K-original}
\end{subfigure}
\hfill
\begin{subfigure}[t]{0.47\textwidth}
\centering
\begin{tikzpicture}[scale=3]

  \coordinate (A) at (0,0);
  \coordinate (B) at (1,0);
  \coordinate (C) at (0.25,0.85);

  \path let \p1=(A), \p2=(B), \p3=(C)
  in coordinate (D) at ({(\x1+\x2+\x3)/3},{(\y1+\y2+\y3)/3});

  \fill[blue!10]  (D)--(A)--(B)--cycle;
  \fill[red!10]   (D)--(B)--(C)--cycle;
  \fill[green!10] (D)--(C)--(A)--cycle;

  \draw[thick] (A)--(B)--(C)--cycle;

  \draw[thick, dashed] (A)--(D);
  \draw[thick, dashed] (B)--(D);
  \draw[thick, dashed] (C)--(D);

  \fill (A) circle (0.016);
  \fill (B) circle (0.016);
  \fill (C) circle (0.016);
  \fill (D) circle (0.016);

  \node[below left]  at (A) {$A$};
  \node[below right] at (B) {$B$};
  \node[above]       at (C) {$C$};
  \node[right]       at (D) {$D$};

\end{tikzpicture}
\caption{Subdivision $\mathcal{K}'$ of $\mathcal{K}$.}
\label{fig:K-subdivision}
\end{subfigure}

\caption{Complex $\mathcal{K}$ and its sub-division $\mathcal{K}'$.}
\label{fig:Complex and its sub-division}
\end{figure}
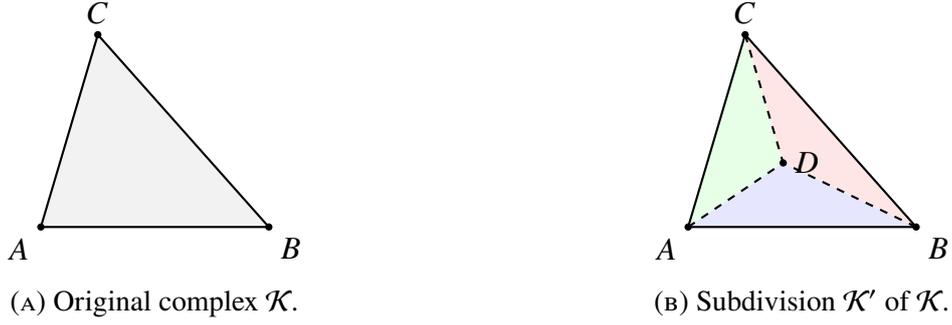
\paragraph{Open stars in $\mathcal{K}'$.}
\[
\begin{aligned}
\operatorname{St}(A,\mathcal{K}') &=
\operatorname{Int}[DAB]\ \cup\ \operatorname{Int}[DCA]\ \cup\
\operatorname{Int}[AB]\ \cup\ \operatorname{Int}[AC]\ \cup\ \operatorname{Int}[AD]\ \cup\ \{A\},\\
\operatorname{St}(B,\mathcal{K}') &=
\operatorname{Int}[DAB]\ \cup\ \operatorname{Int}[DBC]\ \cup\
\operatorname{Int}[AB]\ \cup\ \operatorname{Int}[BC]\ \cup\ \operatorname{Int}[BD]\ \cup\ \{B\},\\
\operatorname{St}(C,\mathcal{K}') &=
\operatorname{Int}[DCA]\ \cup\ \operatorname{Int}[DBC]\ \cup\
\operatorname{Int}[AC]\ \cup\ \operatorname{Int}[BC]\ \cup\ \operatorname{Int}[CD]\ \cup\ \{C\},\\
\operatorname{St}(D,\mathcal{K}') &=
\operatorname{Int}[DAB]\ \cup\ \operatorname{Int}[DBC]\ \cup\ \operatorname{Int}[DCA]\ \cup\
\operatorname{Int}[AD]\ \cup\ \operatorname{Int}[BD]\ \cup\ \operatorname{Int}[CD]\ \cup\ \{D\}.
\end{aligned}
\]

\paragraph{Claim.}
For the new vertex $w:=D\in \mathrm{Vert}(\mathcal{K}')$,
\[
\operatorname{St}(D,\mathcal{K}') \subset \operatorname{St}(A,\mathcal{K}),\qquad
\operatorname{St}(D,\mathcal{K}') \subset \operatorname{St}(B,\mathcal{K}),\qquad
\operatorname{St}(D,\mathcal{K}') \subset \operatorname{St}(C,\mathcal{K}),
\]
and these are exactly the vertices $v$ of $\mathcal{K}$ for which the inclusion holds (namely $v\in\{A,B,C\}$, the vertices of the carrier simplex $\sigma$).

\paragraph{Verification.}
Since $D\in \operatorname{Int}\sigma$ and $\sigma=[ABC]$ is the unique simplex of $\mathcal{K}$ whose interior contains $D$, the carrier of $D$ is $\sigma$.
In $\mathcal{K}'$, the open star $\operatorname{St}(D,\mathcal{K}')$ is the union of the interiors of the three triangles $[DAB]$, $[DBC]$, $[DCA]$ and their incident open faces, each lying inside $\sigma$.
Because $A$ is a vertex of $\sigma$, $\operatorname{St}(A,\mathcal{K})$ is the union of the interiors of all simplices of $\mathcal{K}$ containing $A$, namely the interior of $\sigma$, the edges $[AB]$, $[AC]$, and the vertex $A$.
Thus $\operatorname{St}(D,\mathcal{K}')\subset \operatorname{St}(A,\mathcal{K})$, and by symmetry the same holds for $B$ and $C$.
Conversely, if $v$ is a vertex of $\mathcal{K}$ with $\operatorname{St}(D,\mathcal{K}')\subset \operatorname{St}(v,\mathcal{K})$, then any point $x\in \operatorname{Int}\sigma$ sufficiently close to $D$ lies in $\operatorname{St}(D,\mathcal{K}')$ and hence in $\operatorname{St}(v,\mathcal{K})$.
Therefore $x\in \operatorname{Int}\rho$ for some simplex $\rho\in \mathcal{K}$ containing $v$; since $x\in \operatorname{Int}\sigma$, uniqueness of the containing simplex with nonempty interior gives $\rho=\sigma$, so $v$ is a vertex of $\sigma$.
\end{example}

The above lemma and example tell us a method to construct a subdivision of a complex by ``staring at the complex from an interior point". To this end, we are going to generalize this method to subdivision of complexes. 

\begin{lemma}\label{l:15.2}
If $\mathcal{K}$ is a complex, then intersection of any collection of sub-complexes of $\mathcal{K}$ is a sub-complex of $\mathcal{K}$. Conversely, if $\{\mathcal{K}_{\alpha}\}$ is a collection of complxes in $\mathbb{E}^{J}$, and if the intersection of every pair $|\mathcal{K}_{\alpha}| \cap |\mathcal{K}_{\beta}|$ is the polytype of a complex that is a sub-complex of both $\mathcal{K}_{\alpha}$ and $\mathcal{K}_{\beta}$, then the union $\cup \mathcal{K}_{\alpha}$ is a complex.
\end{lemma}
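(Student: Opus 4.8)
The plan is to treat the two assertions separately, since the first is a quick set-theoretic check while the second carries all the real content. For the first statement, I would recall that a subcomplex of $\mathcal{K}$ is a subcollection of its simplices that is itself closed under passage to faces. Given a family $\{\mathcal{L}_i\}$ of subcomplexes, set $\mathcal{L}=\bigcap_i \mathcal{L}_i$. Every simplex of $\mathcal{L}$ lies in each $\mathcal{L}_i\subseteq \mathcal{K}$, so $\mathcal{L}\subseteq \mathcal{K}$; and if $\sigma\in\mathcal{L}$ and $\tau$ is a face of $\sigma$, then $\sigma\in\mathcal{L}_i$ for every $i$, whence $\tau\in\mathcal{L}_i$ for every $i$ (each $\mathcal{L}_i$ is face-closed), so $\tau\in\mathcal{L}$. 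Thus $\mathcal{L}$ is a face-closed subcollection of $\mathcal{K}$, i.e.\ a subcomplex, and no geometry is needed.

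For the converse I would verify that $\mathcal{C}:=\bigcup_\alpha \mathcal{K}_\alpha$ satisfies the two defining properties of a geometric complex: (i) it is closed under faces, and (ii) the intersection of any two of its simplices is either empty or a common face of both. Property (i) is immediate: if $\sigma\in\mathcal{C}$ then $\sigma\in\mathcal{K}_\alpha$ for some $\alpha$, and since $\mathcal{K}_\alpha$ is a complex every face of $\sigma$ is already in $\mathcal{K}_\alpha\subseteq\mathcal{C}$. For property (ii), fix $\sigma\in\mathcal{K}_\alpha$ and $\tau\in\mathcal{K}_\beta$; if $\alpha=\beta$ there is nothing to prove, so I would focus on the case $\alpha\neq\beta$.

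The heart of the argument is to push the intersection into the shared polytope. Writing $|\mathcal{K}_\alpha|\cap|\mathcal{K}_\beta|=|\mathcal{M}|$ with $\mathcal{M}$ a common subcomplex of $\mathcal{K}_\alpha$ and $\mathcal{K}_\beta$, and noting $\sigma\subseteq|\mathcal{K}_\alpha|$, $\tau\subseteq|\mathcal{K}_\beta|$, I get $\sigma\cap\tau\subseteq|\mathcal{M}|$. I would then establish the auxiliary fact that, for a simplex $\sigma\in\mathcal{K}_\alpha$ and a subcomplex $\mathcal{M}\subseteq\mathcal{K}_\alpha$, the set $\sigma\cap|\mathcal{M}|$ is exactly the union of those faces of $\sigma$ that belong to $\mathcal{M}$: for any $\mu\in\mathcal{M}\subseteq\mathcal{K}_\alpha$ the intersection $\sigma\cap\mu$ is a common face of $\sigma$ and $\mu$ by the complex property of $\mathcal{K}_\alpha$, hence a face of $\sigma$ lying in $\mathcal{M}$. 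Applying this to $\sigma$ (in $\mathcal{K}_\alpha$) and to $\tau$ (in $\mathcal{K}_\beta$), and using $\sigma\cap\tau\subseteq|\mathcal{M}|$, I can write
\[
\sigma\cap\tau=\bigcup_{s,t}(s\cap t),
\]
where $s$ runs over faces of $\sigma$ in $\mathcal{M}$ and $t$ over faces of $\tau$ in $\mathcal{M}$; each $s\cap t$ is a common face of $s,t\in\mathcal{M}$, hence simultaneously a face of $\sigma$ and of $\tau$.

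The step I expect to be the main obstacle is the final upgrade from ``$\sigma\cap\tau$ is a union of faces of $\sigma$'' to ``$\sigma\cap\tau$ is a single face of $\sigma$.'' Here I would invoke convexity: $\sigma\cap\tau$ is convex, being an intersection of two convex simplices, whereas each $s\cap t$ is the convex hull of a vertex subset of $\sigma$. A convex set equal to a union of faces $\bigcup_j \operatorname{conv}(S_j)$ of $\sigma$ must coincide with $\operatorname{conv}\bigl(\bigcup_j S_j\bigr)$, since it contains all the vertices $\bigcup_j S_j$ and hence their convex hull, yet is contained in it; therefore it is precisely the face of $\sigma$ spanned by $\bigcup_j S_j$. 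Thus $\sigma\cap\tau$ is a face of $\sigma$, and by the symmetric argument a face of $\tau$, giving property (ii). I would close by remarking that both subcomplex hypotheses on $\mathcal{M}$ are genuinely used (one to resolve $\sigma\cap|\mathcal{M}|$, the other for $\tau\cap|\mathcal{M}|$), and that if the ambient notion of complex includes local finiteness, this must be assumed or verified separately, since the pairwise intersection condition alone does not control how many $\mathcal{K}_\alpha$ can meet a given neighborhood.
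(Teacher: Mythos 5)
The paper states Lemma~\ref{l:15.2} without any proof --- it is transcribed from Munkres' \emph{Elements of Algebraic Topology} (his Lemma~15.2, whence the label), where the argument is left to the reader --- so there is no in-paper proof to compare yours against; judged on its own merits, your proposal is correct and supplies the missing argument. The first half is indeed just the face-closure check, and no geometry is needed. For the converse, your reduction into the common subcomplex $\mathcal{M}$ is sound: from $\sigma\cap\tau\subseteq|\mathcal{K}_\alpha|\cap|\mathcal{K}_\beta|=|\mathcal{M}|$ and the auxiliary identity that $\sigma\cap|\mathcal{M}|$ is the union of the faces of $\sigma$ lying in $\mathcal{M}$ (which correctly uses both that $\sigma\cap\mu$ is a common face of $\sigma$ and $\mu$ inside the complex $\mathcal{K}_\alpha$, and that $\mathcal{M}$ is face-closed), you legitimately obtain $\sigma\cap\tau=\bigcup_{s,t}(s\cap t)$ with each piece a face of both $\sigma$ and $\tau$, since $\mathcal{M}$ is itself a complex and a face of a face is a face. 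You also correctly identify the crux --- a union of faces need not be a face --- and your convexity upgrade disposes of it cleanly: $\sigma\cap\tau$ is convex, contains every vertex in $\bigcup_j S_j$, and is contained in $\operatorname{conv}\bigl(\bigcup_j S_j\bigr)$, hence equals the face of $\sigma$ spanned by those vertices; the symmetric argument gives a face of $\tau$. Two small points to tidy up: dispatch the case $\sigma\cap\tau=\emptyset$ explicitly (the complex condition permits it, and your convex-hull identity tacitly assumes the union is nonempty), and note that your closing caveat about local finiteness is unnecessary under the conventions in force here --- a complex in $\mathbb{E}^{J}$ in Munkres' sense is defined only by face-closure and the pairwise face-intersection condition, with the polytope carrying the coherent topology, so the pairwise hypothesis on the $\mathcal{K}_\alpha$ is all that the conclusion requires.
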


\begin{algorithm}[General method for subdivision of a complex by starring interior points, built skeleton-by-skeleton]\label{al:General method for subdivision of a complex by starring interior points, built skeleton-by-skeleton]}
Our objective is to construct a subdivision of a complex $\mathcal{K}$ inductively over skeleta by first subdividing the $p$-skeleton and then extending into each $(p+1)$-simplex by starring from an interior point. Follow these steps: 

\begin{description}
\item[Step 1: Data and notation] Let $\mathcal{K}$ be a simplicial complex.
For $p\ge 0$, denote by $\mathcal{K}^{(p)}$ the $p$-skeleton of $\mathcal{K}$, i.e., the subcomplex consisting of all simplices of dimension $\le p$.
Suppose $\mathcal{L}_p$ is a subdivision of $\mathcal{K}^{(p)}$ (same underlying space, refined simplices).

\item[Step 2: Boundary restriction on a $(p+1)$-simplex] Let $\sigma$ be a $(p+1)$-simplex of $\mathcal{K}$.
Its boundary $\operatorname{Bd}(\sigma)$ is the union of all $p$-faces of $\sigma$, hence a subcomplex of $\mathcal{K}^{(p)}$.
Since $\mathcal{L}_p$ subdivides $\mathcal{K}^{(p)}$, the boundary sphere $|\operatorname{Bd}(\sigma)|$ is also the underlying space of a subcomplex of $\mathcal{L}_p$.
Define
\[
\mathcal{L}_\sigma \;:=\; \{\, \tau \in \mathcal{L}_p \;:\; |\tau| \subset |\operatorname{Bd}(\sigma)| \,\}.
\]
Intuitively, $\mathcal{L}_\sigma$ is the restriction of the $p$-skeleton subdivision to the boundary of $\sigma$.

\item[Step 3: Choose an interior apex and cone]
Pick a point $w_\sigma \in \operatorname{Int}(\sigma)$ (e.g., the barycenter). Form the cone
\[
w_\sigma \ast \mathcal{L}_\sigma \;:=\; \bigl\{\, [w_\sigma \cup \tau] \text{ and all faces } : \tau \in \mathcal{L}_\sigma \,\bigr\}.
\]
Geometrically, this fills $\sigma$ by joining each face of the boundary subdivision to the interior apex $w_\sigma$, so that the underlying space of $w_\sigma \ast \mathcal{L}_\sigma$ is exactly $|\sigma|$.

\item[Step 4: Extend from the $p$-skeleton to the $(p+1)$-skeleton] Define
\[
\mathcal{L}_{p+1} \;:=\; \mathcal{L}_p \;\cup\; \bigcup_{\substack{\sigma \in \mathcal{K}\\ \dim \sigma = p+1}} \bigl( w_\sigma \ast \mathcal{L}_\sigma \bigr).
\]
Thus $\mathcal{L}_{p+1}$ agrees with $\mathcal{L}_p$ on $\mathcal{K}^{(p)}$ and, for each $(p+1)$-simplex $\sigma$, replaces the interior of $\sigma$ by the coned subdivision $w_\sigma \ast \mathcal{L}_\sigma$.

\item[Step 5: Why $\mathcal{L}_{p+1}$ is a simplicial complex]\hfill
\begin{enumerate}
\item \textbf{Closure under faces:} If $\Delta \in w_\sigma \ast \mathcal{L}_\sigma$, then any face of $\Delta$ is either a face of $\mathcal{L}_\sigma \subset \mathcal{L}_p$ or another cone face with apex $w_\sigma$; in both cases it lies in $\mathcal{L}_{p+1}$.
\item \textbf{Compatible overlaps:} If $\sigma,\sigma'$ are distinct $(p+1)$-simplices sharing a $p$-face $F$, then $\mathcal{L}_\sigma$ and $\mathcal{L}_{\sigma'}$ coincide on $F$ because both are restrictions of $\mathcal{L}_p$.
Consequently, the coned pieces $w_\sigma \ast \mathcal{L}_\sigma$ and $w_{\sigma'} \ast \mathcal{L}_{\sigma'}$ match along $F$, yielding no gaps or overlaps.
\end{enumerate}

\item[Step 6: Underlying space and subdivision property]
\begin{enumerate}
 \item On $\mathcal{K}^{(p)}$, the underlying space is unchanged and equals $|\mathcal{K}^{(p)}|$ (already subdivided by $\mathcal{L}_p$).
 \item Each $(p+1)$-simplex $\sigma$ is exactly filled by $w_\sigma \ast \mathcal{L}_\sigma$ with underlying space $|\sigma|$.
 \item Therefore $|\mathcal{L}_{p+1}| = |\mathcal{K}^{(p+1)}|$, and $\mathcal{L}_{p+1}$ refines $\mathcal{K}^{(p+1)}$ simplexwise; hence $\mathcal{L}_{p+1}$ is a subdivision of $\mathcal{K}^{(p+1)}$.
\end{enumerate}

\item[Step 7: Iteration (full sub-division of $\mathcal{K}$)] Starting from $\mathcal{L}_0 = \mathcal{K}^{(0)}$ (trivial subdivision on vertices) or from any chosen $\mathcal{L}_p$, repeat Steps 1–5 for $p=0,1,\dots,\dim \mathcal{K}-1$.
The result is a subdivision of $\mathcal{K}$ obtained by \emph{starring} interior points in increasing dimension. Special choices of $w_\sigma$ recover familiar constructions (e.g., barycentric subdivision when $w_\sigma$ is the barycenter of $\sigma$ for every $\sigma$).
\end{description}
\end{algorithm}

\section{Barycentric subdivision of complex}\label{s:Barycentric subdivision of complex}

Among various subdivision methods, barycentric subdivision is particularly important due to its systematic construction by using barycenters of faces and generating a refined simplicial complex. Subdivisions play a vital role in topology and computational geometry by enabling finer approximations, facilitating simplicial approximations of continuous maps, and supporting algorithms that require mesh refinement.

\begin{definition}[Barycenter of a simplex]\label{d:Barycenter of a simplex}
Let $\sigma=[v_0 v_1 \dots v_p]$ be an $n$-simplex in an affine (or Euclidean) space with vertices $v_0,\dots,v_p$.
The barycenter (or centroid) of $\sigma$ is the point
\[
b_{\sigma} = \frac{1}{p+1}\,\sum_{i=0}^{p} v_i,
\]
equivalently, the unique point whose barycentric coordinates relative to $\{v_i\}$ are all equal to $\tfrac{1}{p+1}$.
\end{definition}

\begin{example}[Barycenter of a 1-simplex]\label{eg:Barycenter of a 1-simplex}
Let $\sigma=[v_0 v_1]$ be a $1$-simplex in $\mathbb{R}^n$ with endpoints (vertices) $v_0$ and $v_1$.
The barycenter $\sigma=[v_0 v_1]$ is the midpoint
\[
b_{\sigma} = \frac{v_0 + v_1}{2}.
\]
Equivalently, in barycentric coordinates relative to the vertices $\{v_0, v_1\}$, the barycenter is the unique point with weights
\[
\lambda_{v_0} =\lambda_{v_1} =\tfrac{1}{2}, \qquad b_{\sigma} = \lambda_{v_0} v_0 + \lambda_{v_1} v_1.
\]
If $v_0=(a_1,\dots,a_n)$ and $v_0=(b_1,\dots,b_n)$, then
\[
b_{\sigma} = \Big(\tfrac{a_1+b_1}{2},\ \dots,\ \tfrac{a_n+b_n}{2}\Big).
\]

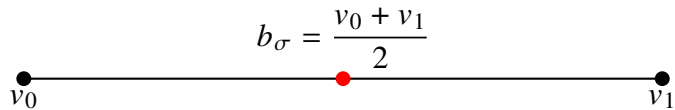
\begin{figure}[h!]
    \centering
    \begin{tikzpicture}[scale=2.8]

  \coordinate (A) at (0,0);
  \coordinate (B) at (3,0);

  \coordinate (M) at ($(A)!0.5!(B)$);

  \draw[thick] (A)--(B);

  \fill[red] (M) circle (0.035);

  \fill (A) circle (0.035);
  \fill (B) circle (0.035);

  \node[below] at (A) {$v_0$};
  \node[below] at (B) {$v_1$};
  \node[above] at (M) {$b_{\sigma}=\dfrac{v_0 + v_1}{2}$};


\end{tikzpicture}
    \caption{Barycenter of a 1-simplex}
    \label{fig:Barycenter of a 1-simplex}
\end{figure}
\end{example}

\begin{example}[Barycenter of a 2-simplex]\label{eg:Barycenter of a 2-simplex}
Let $\sigma = [v_0 v_1 v_2]$ be a $2$-simplex (triangle) in an affine/Euclidean space with vertices $\sigma = [v_0 v_1 v_2]$.
The barycenter (centroid) of $[v_0 v_1 v_2]$ is
\[
b_{\sigma} =  \frac{v_0 + v_1 + v_2}{3}.
\]
Equivalently, in barycentric coordinates relative to the vertices $\{v_0, v_1, v_2\}$,
\[
\lambda_{v_0}=\lambda_{v_1}=\lambda_{v_2}=\tfrac{1}{3},
\qquad
b_{\sigma} = \tfrac{1}{3}v_0 + \tfrac{1}{3}v_1 + \tfrac{1}{3}v_2.
\]
If $v_0=(x_0,y_0),\ v_1=(x_1,y_1),\ v_2=(x_2,y_2)\in\mathbb{R}^2$, then
\[
b_{\sigma} = \Big(\tfrac{x_0+x_1+x_2}{3},\ \tfrac{y_0+y_1+y_2}{3}\Big).
\]

\begin{figure}[h!]
    \centering
\begin{tikzpicture}[scale=4]

  \coordinate (v0) at (0,0);
  \coordinate (v1) at (1,0);
  \coordinate (v2) at (0.25,0.85);

  \coordinate (m0) at ($(v1)!0.5!(v2)$); 
  \coordinate (m1) at ($(v2)!0.5!(v0)$); 
  \coordinate (m2) at ($(v0)!0.5!(v1)$); 

  \coordinate (b) at ($ (v0)!.333333!(v1) !.333333! (v2) $);

  \fill[gray!10] (v0)--(v1)--(v2)--cycle;

  \draw[gray!70, dashed] (v0)--(m0);
  \draw[gray!70, dashed] (v1)--(m1);
  \draw[gray!70, dashed] (v2)--(m2);

  \draw[thick] (v0)--(v1)--(v2)--cycle;

  \fill (v0) circle (0.016);
  \fill (v1) circle (0.016);
  \fill (v2) circle (0.016);


  \node[below left]  at (v0) {$v_0$};
  \node[below right] at (v1) {$v_1$};
  \node[above]       at (v2) {$v_2$};
  \node[right]       at (b)  {$b_{\sigma}$};

\end{tikzpicture}
    \caption{Barycenter of a 2-simplex}
    \label{fig:Barycenter of a 2-simplex}
\end{figure}
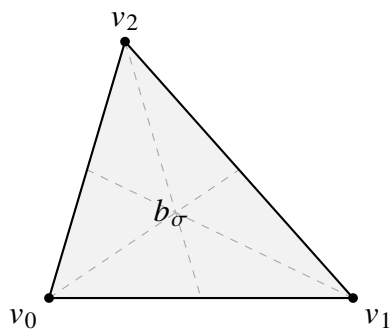
\end{example}

We now describe the general method for constructing complex sub-divisions  in term of barycentric division. 

\begin{definition}[$n$th barycentric subdivision of a complex] \label{d:nth barycentric subdivision of a complex}
Let $\mathcal{K}$ be a simplicial complex. Define a sequence $\{\mathcal{L}_p\}_{p\ge 0}$ of subdivisions of the skeleta of $\mathcal{K}$ inductively as follows.
Set $\mathcal{L}_0 := \mathcal{K}^{(0)}$, the $0$-skeleton of $\mathcal{K}$.
Assume $\mathcal{L}_p$ is a subdivision of the $p$-skeleton $\mathcal{K}^{(p)}$.
For each $(p+1)$-simplex $\sigma \in \mathcal{K}$, let $b_{\sigma}$ be the barycenter of $\sigma$, and let $\mathcal{L}_\sigma$ denote the restriction of $\mathcal{L}_p$ to $\operatorname{Bd}(\sigma)$.
Define
\[
\mathcal{L}_{p+1}
\;:=\;
\mathcal{L}_p \;\cup\;
\bigcup_{\substack{\sigma \in \mathcal{K}\\ \dim \sigma = p+1}}
\bigl( b_{\sigma} \ast \mathcal{L}_\sigma \bigr),
\]
i.e., $\mathcal{L}_{p+1}$ is obtained from $\mathcal{L}_p$ by starring $\mathcal{L}_p$ from the barycenters of all $(p+1)$-simplices of $\mathcal{K}$.
By Lemma~\eqref{l:15.2}, $\mathcal{L}_{p+1}$ is a complex and a subdivision of $\mathcal{K}^{(p+1)}$.

The \emph{first barycentric subdivision} of $\mathcal{K}$ is the union
\[
\operatorname{Bsd}(\mathcal{K})
\;:=\;
\bigcup_{p\ge 0} \mathcal{L}_p,
\]
which is a subdivision of $\mathcal{K}$.

Iterates are defined by
\[
\operatorname{Bsd}^{1}(\mathcal{K}) := \operatorname{Bsd}(\mathcal{K}), \quad
\operatorname{Bsd}^{2}(\mathcal{K}) := \operatorname{Bsd}(\operatorname{Bsd}(\mathcal{K})),
\quad
\operatorname{Bsd}^{\,n}(\mathcal{K}) := \operatorname{Bsd}\bigl(\operatorname{Bsd}^{\,n-1}(\mathcal{K})\bigr)
\quad (n\ge 2),
\]
and are called the $n$th barycentric subdivisions of $\mathcal{K}$.
\end{definition}


\begin{example}[Barycentric subdivisions of a simple complex]
Let $\mathcal{K}$ be the simplicial complex obtained by taking a triangle and attaching a line segment at one of its vertices.
Concretely, let the triangle be $[v_0 v_1 v_2]$ with all faces, and attach the edge $[v_0 w]$ at the vertex $v_0$.
Thus
\[
\mathcal{K}
\,=\,
\bigl\{
v_0,v_1,v_2,w;\ 
[v_0 v_1],[v_1 v_2],[v_2 v_0],[v_0 w];\
[v_0 v_1 v_2]
\bigr\}.
\]
\begin{description}
\item[Step 1: The $0$-skeleton and $\mathcal{L}_0$] By definition, $\mathcal{L}_0 = \mathcal{K}^{(0)} = \{v_0,v_1,v_2,w\}$.

\item[Step 2: The $1$-skeleton and $\mathcal{L}_1$ (first barycentric subdivision on $\mathcal{K}^{(1)}$)]
We subdivide each $1$-simplex in $\mathcal{K}^{(1)}$ by starring from its barycenter.
Introduce midpoints (barycenters of edges):
\[
m_{01} := \tfrac{v_0+v_1}{2},\quad
m_{12} := \tfrac{v_1+v_2}{2},\quad
m_{20} := \tfrac{v_2+v_0}{2},\quad
m_{0w} := \tfrac{v_0+w}{2}.
\]
Each original edge $[ab]$ is replaced by the two edges $[a\,m_{ab}]$ and $[m_{ab}\,b]$.
Therefore the $1$-skeleton subdivision is
\[
\mathcal{L}_1^{(1)} \,=\,
\bigl\{
[v_0 m_{01}], [m_{01} v_1],\ 
[v_1 m_{12}], [m_{12} v_2],\ 
[v_2 m_{20}], [m_{20} v_0],\ 
[v_0 m_{0w}], [m_{0w} w]
\bigr\},
\]
with vertex set $\{v_0,v_1,v_2,w, m_{01},m_{12},m_{20},m_{0w}\}$.
As there are no $2$-simplices yet used in this step, we set
\[
\mathcal{L}_1 \;=\; \mathcal{L}_0 \cup \mathcal{L}_1^{(1)}.
\]

\item[Step 3: The $2$-skeleton and $\mathcal{L}_2$ (complete first barycentric subdivision of $\mathcal{K}$)]
Now subdivide the unique $2$-simplex $[v_0 v_1 v_2]$ by starring from its barycenter
\[
b := \tfrac{v_0+v_1+v_2}{3}.
\]
Let $\mathcal{L}_{[v_0 v_1 v_2]}$ be the restriction of $\mathcal{L}_1$ to $\operatorname{Bd}[v_0 v_1 v_2] = [v_0 v_1]\cup[v_1 v_2]\cup[v_2 v_0]$.
On each boundary edge, $\mathcal{L}_1$ has already introduced the midpoint; coning from $b$ therefore yields the six $2$-simplices
\[
[b\,v_0\,m_{01}],\ [b\,m_{01}\,v_1],\ [b\,v_1\,m_{12}],\ [b\,m_{12}\,v_2],\ [b\,v_2\,m_{20}],\ [b\,m_{20}\,v_0].
\]
Collecting faces, the first barycentric subdivision is
\[
\operatorname{Bsd}(\mathcal{K}) \;=\; \mathcal{L}_2 \;=\; \mathcal{L}_1 \;\cup\; \bigl( b \ast \mathcal{L}_{[v_0 v_1 v_2]} \bigr).
\]
In particular, the edge $[v_0 w]$ remains subdivided only at $m_{0w}$, while the triangle is subdivided into six smaller triangles meeting at $b$.

\item[Step 4: The second barycentric subdivision $\operatorname{Bsd}^2(\mathcal{K})$]
Apply the same procedure to $\operatorname{Bsd}(\mathcal{K})$:
- On each edge of $\operatorname{Bsd}(\mathcal{K})$, insert its midpoint to subdivide it into two edges.
- On each $2$-simplex of $\operatorname{Bsd}(\mathcal{K})$ (there are six from the triangle region; the edge $[v_0 w]$ contributes no $2$-simplices), take its barycenter and cone from it over the subdivided boundary of that $2$-simplex.

Concretely, if $[x y z]$ is one of the six $2$-simplices in $\operatorname{Bsd}(\mathcal{K})$ with edge midpoints $m_{xy}, m_{yz}, m_{zx}$ and barycenter $b_{xyz} := \tfrac{x+y+z}{3}$, then $[x y z]$ is replaced by the six triangles
\[
[b_{xyz}\,x\,m_{xy}],\ [b_{xyz}\,m_{xy}\,y],\ [b_{xyz}\,y\,m_{yz}],\ [b_{xyz}\,m_{yz}\,z],\ [b_{xyz}\,z\,m_{zx}],\ [b_{xyz}\,m_{zx}\,x],
\]
together with all their faces; doing this for all six triangles yields the triangular region partitioned into $36$ small triangles.
Along the “tail” edge $[v_0 w]$, each of its two first-subdivision edges $[v_0 m_{0w}]$ and $[m_{0w} w]$ is further subdivided at its midpoint, giving four edges in total on that segment. Thus, $\operatorname{Bsd}^2(\mathcal{K}) \;=\; \operatorname{Bsd}\bigl(\operatorname{Bsd}(\mathcal{K})\bigr)$ is obtained by inserting midpoints on all edges of $\operatorname{Bsd}(\mathcal{K})$ and then coning from the barycenter of each $2$-simplex of $\operatorname{Bsd}(\mathcal{K})$ over its (already) subdivided boundary.
\end{description}

Thus, we can conclude that the $\operatorname{Bsd}(\mathcal{K})$ subdivides the attached edge $[v_0 w]$ at its midpoint and subdivides the triangle into six small triangles meeting at the barycenter $b$.
The $\operatorname{Bsd}^2(\mathcal{K})$ further subdivides each first-subdivision edge at its midpoint and refines each of the six triangles into six more, yielding $36$ triangles in the triangular region and four edges along the attached segment.


\begin{figure}[h!]
\centering

\begin{subfigure}[t]{0.47\textwidth}
\centering
\begin{tikzpicture}[scale=3]

  \coordinate (v0) at (0,0);
  \coordinate (v1) at (1,0);
  \coordinate (v2) at (0.2,0.85);
  \coordinate (w)  at (-0.55,-0.05);

  \fill[gray!10] (v0)--(v1)--(v2)--cycle;

  \draw[thick] (v0)--(v1)--(v2)--cycle;

  \draw[thick] (v0)--(w);

  \fill (v0) circle (0.016);
  \fill (v1) circle (0.016);
  \fill (v2) circle (0.016);
  \fill (w)  circle (0.016);

  \node[below]      at (v0) {$v_0$};
  \node[below]      at (v1) {$v_1$};
  \node[above]      at (v2) {$v_2$};
  \node[below left] at (w)  {$w$};

\end{tikzpicture}
\caption{Original complex $\mathcal{K}$}
\label{fig:K-original}
\end{subfigure}
\hfill
\begin{subfigure}[t]{0.47\textwidth}
\centering
\begin{tikzpicture}[scale=3]

  \coordinate (v0) at (0,0);
  \coordinate (v1) at (1,0);
  \coordinate (v2) at (0.2,0.85);
  \coordinate (w)  at (-0.55,-0.05);

  \coordinate (m01) at ($(v0)!0.5!(v1)$);
  \coordinate (m12) at ($(v1)!0.5!(v2)$);
  \coordinate (m20) at ($(v2)!0.5!(v0)$);
  \coordinate (m0w) at ($(v0)!0.5!(w)$);

  \coordinate (b) at ($ (v0)!1/3!(v1) !1/3! (v2) $);

  \fill[blue!10]  (b)--(v0)--(m01)--cycle;
  \fill[blue!10]  (b)--(m01)--(v1)--cycle;
  \fill[red!10]   (b)--(v1)--(m12)--cycle;
  \fill[red!10]   (b)--(m12)--(v2)--cycle;
  \fill[green!10] (b)--(v2)--(m20)--cycle;
  \fill[green!10] (b)--(m20)--(v0)--cycle;

  \draw[thick] (v0)--(v1)--(v2)--cycle;

  \draw[thick, dashed] (b)--(v0);
  \draw[thick, dashed] (b)--(v1);
  \draw[thick, dashed] (b)--(v2);
  \draw[thick, dashed] (b)--(m01);
  \draw[thick, dashed] (b)--(m12);
  \draw[thick, dashed] (b)--(m20);

  \draw[thick] (v0)--(m0w)--(w);

  \fill (v0) circle (0.016);
  \fill (v1) circle (0.016);
  \fill (v2) circle (0.016);
  \fill (w)  circle (0.016);
  \fill (m01) circle (0.013);
  \fill (m12) circle (0.013);
  \fill (m20) circle (0.013);
  \fill (m0w) circle (0.013);
  \fill[red] (b) circle (0.015);

  \node[below]      at (v0) {$v_0$};
  \node[below]      at (v1) {$v_1$};
  \node[above]      at (v2) {$v_2$};
  \node[below left] at (w)  {$w$};
  \node[below]      at (m01) {\scriptsize $m_{01}$};
  \node[right]      at (m12) {\scriptsize $m_{12}$};
  \node[left]       at (m20) {\scriptsize $m_{20}$};
  \node[below]       at (m0w) {\scriptsize $m_{0w}$};
  \node[right]      at (b)   {\scriptsize $b$};

\end{tikzpicture}
\caption{First barycentric subdivision $\operatorname{Bsd}(\mathcal{K})$}
\label{fig:K-bsd1}
\end{subfigure}

\begin{subfigure}[t]{0.98\textwidth}
\centering
\begin{tikzpicture}[scale=4, line cap=round, line join=round]

  \coordinate (v0) at (0,0);
  \coordinate (v1) at (1,0);
  \coordinate (v2) at (0.22,0.84);
  \coordinate (w)  at (-0.62,-0.06);

  \coordinate (m01) at ($(v0)!0.5!(v1)$);
  \coordinate (m12) at ($(v1)!0.5!(v2)$);
  \coordinate (m20) at ($(v2)!0.5!(v0)$);
  \coordinate (b)   at ($ (v0)!1/3!(v1) !1/3! (v2) $);

  \coordinate (m0w) at ($(v0)!0.5!(w)$);   
  \coordinate (s1)  at ($(v0)!0.5!(m0w)$); 
  \coordinate (s2)  at ($(m0w)!0.5!(w)$);  

  \definecolor{cA}{RGB}{239,83,80}
  \definecolor{cB}{RGB}{255,167,38}
  \definecolor{cC}{RGB}{255,238,88}
  \definecolor{cD}{RGB}{102,187,106}
  \definecolor{cE}{RGB}{38,198,218}
  \definecolor{cF}{RGB}{66,165,245}
  \definecolor{cG}{RGB}{171,71,188}
  \definecolor{cH}{RGB}{255,112,67}
  \definecolor{cI}{RGB}{156,204,101}
  \definecolor{cJ}{RGB}{77,208,225}
  \definecolor{cK}{RGB}{144,202,249}
  \definecolor{cL}{RGB}{186,104,200}

  \newcount\coloridx
  \coloridx=0\relax
  \def\nextfill{%
    \ifnum\coloridx=0 \def\thiscol{cA}\fi
    \ifnum\coloridx=1 \def\thiscol{cB}\fi
    \ifnum\coloridx=2 \def\thiscol{cC}\fi
    \ifnum\coloridx=3 \def\thiscol{cD}\fi
    \ifnum\coloridx=4 \def\thiscol{cE}\fi
    \ifnum\coloridx=5 \def\thiscol{cF}\fi
    \ifnum\coloridx=6 \def\thiscol{cG}\fi
    \ifnum\coloridx=7 \def\thiscol{cH}\fi
    \ifnum\coloridx=8 \def\thiscol{cI}\fi
    \ifnum\coloridx=9 \def\thiscol{cJ}\fi
    \ifnum\coloridx=10 \def\thiscol{cK}\fi
    \ifnum\coloridx=11 \def\thiscol{cL}\fi
    \advance\coloridx by 1\relax
    \ifnum\coloridx>11 \coloridx=0\relax\fi
  }

  \newcommand{\SecondBsdColored}[3]{%
    \coordinate (pq) at ($ (#1)!0.5!(#2) $);
    \coordinate (qr) at ($ (#2)!0.5!(#3) $);
    \coordinate (rp) at ($ (#3)!0.5!(#1) $);
    \coordinate (cT) at ($ (#1)!1/3!(#2) !1/3! (#3) $);

    \nextfill \fill[\thiscol!45] (cT)--(#1)--(pq)--cycle;
    \nextfill \fill[\thiscol!45] (cT)--(pq)--(#2)--cycle;
    \nextfill \fill[\thiscol!45] (cT)--(#2)--(qr)--cycle;
    \nextfill \fill[\thiscol!45] (cT)--(qr)--(#3)--cycle;
    \nextfill \fill[\thiscol!45] (cT)--(#3)--(rp)--cycle;
    \nextfill \fill[\thiscol!45] (cT)--(rp)--(#1)--cycle;

    \draw[thin, black!70] (#1)--(pq)--(#2)--(qr)--(#3)--(rp)--(#1);
    \draw[thin, black!60] (cT)--(#1) (cT)--(#2) (cT)--(#3) (cT)--(pq) (cT)--(qr) (cT)--(rp);
  }

  \SecondBsdColored{b}{v0}{m01}
  \SecondBsdColored{b}{m01}{v1}
  \SecondBsdColored{b}{v1}{m12}
  \SecondBsdColored{b}{m12}{v2}
  \SecondBsdColored{b}{v2}{m20}
  \SecondBsdColored{b}{m20}{v0}

  \draw[thick] (v0)--(v1)--(v2)--cycle;
  \draw[thick] (v0)--(m01)--(v1);
  \draw[thick] (v1)--(m12)--(v2);
  \draw[thick] (v2)--(m20)--(v0);

  \draw[thin, dashed, black!50] (b)--(v0) (b)--(v1) (b)--(v2) (b)--(m01) (b)--(m12) (b)--(m20);

  \draw[very thick] (v0)--(s1)--(m0w)--(s2)--(w);

  \fill (v0) circle (0.018);
  \fill (v1) circle (0.018);
  \fill (v2) circle (0.018);
  \fill (w)  circle (0.018);

  \fill (m01) circle (0.013);
  \fill (m12) circle (0.013);
  \fill (m20) circle (0.013);
  \fill[red] (b) circle (0.017);

  \fill (m0w) circle (0.013);
  \fill (s1) circle (0.011);
  \fill (s2) circle (0.011);

  \node[below=2pt] at (v0) {$v_0$};
  \node[below=2pt] at (v1) {$v_1$};
  \node[above=2pt] at (v2) {$v_2$};
  \node[below left=0pt] at (w)  {$w$};
  \node[right=2pt] at (b)  {\scriptsize $b$};
  \node[below=2pt] at (m0w) {\scriptsize $m_{0w}$};

\end{tikzpicture}
\caption{Second barycentric subdivision $\operatorname{Bsd}^2(\mathcal{K})$}
\label{fig:K-bsd2}
\end{subfigure}

\caption{First and second barycentric sub-division of complex $\mathcal{K}$}
\label{fig:First and second barycentric sub-division of complex K}
\end{figure}
\end{example}

\begin{lemma}[Barcentric subdivision via flags of faces]\label{l:Barcentric subdivision via flags of faces}
The complex $\operatorname{Sd}(\mathcal{K})$ equals the collection of all simplices of the form 
$b_{\sigma_{1}} b_{\sigma_{2}} \cdots b_{\sigma_{2}}$, where $\sigma_{1} > \sigma_{2} \cdots > \sigma_{n}$, and $\sigma_{1} > \sigma_{2}$ implies $\sigma_{2}$ is proper face of $\sigma_{1}$ and $b_{\sigma_{i}}$ is the barycenter of $\sigma = [v_{0}v_{1}\cdots v_{p}]$ is defined by the point 
\[b_{\sigma_{i}} = \sum_{i = 0}^{p} \dfrac{1}{p + 1}v_{i}\]
\end{lemma}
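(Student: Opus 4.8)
The plan is to argue by induction on dimension $p$, running in lockstep with the inductive construction of $\operatorname{Bsd}(\mathcal{K})$ from Definition~\eqref{d:nth barycentric subdivision of a complex}. Write $\mathcal{F}_p$ for the collection of all simplices $[b_{\sigma_1}b_{\sigma_2}\cdots b_{\sigma_k}]$ spanned by the barycenters of a strictly decreasing flag $\sigma_1 > \sigma_2 > \cdots > \sigma_k$ of faces of $\mathcal{K}$ with $\dim\sigma_1 \le p$. The target is $\mathcal{L}_p = \mathcal{F}_p$ for every $p$; taking the union over $p$ then gives $\operatorname{Bsd}(\mathcal{K}) = \bigcup_p \mathcal{L}_p = \bigcup_p \mathcal{F}_p$, which is precisely the asserted flag description (the statement's $\operatorname{Sd}(\mathcal{K})$ being this barycentric subdivision). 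The base case $p=0$ is immediate: $\mathcal{L}_0 = \mathcal{K}^{(0)}$ is the vertex set, each vertex $v$ is the barycenter of the $0$-simplex it spans, and the only flags with top dimension $0$ are the single $0$-simplices, so $\mathcal{L}_0 = \mathcal{F}_0$.

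For the inductive step I would assume $\mathcal{L}_p = \mathcal{F}_p$ and dissect the new simplices produced in $\mathcal{L}_{p+1} = \mathcal{L}_p \cup \bigcup_{\dim\sigma = p+1}(b_\sigma \ast \mathcal{L}_\sigma)$. Fixing a $(p+1)$-simplex $\sigma$, the factor $\mathcal{L}_\sigma$ is the restriction of $\mathcal{L}_p$ to $\operatorname{Bd}(\sigma)$, so by hypothesis its simplices are exactly the flag-simplices $[b_{\tau_1}\cdots b_{\tau_k}]$ whose top face $\tau_1$ is a proper face of $\sigma$. Coning from the apex $b_\sigma$ carries such a simplex to $[b_\sigma\,b_{\tau_1}\cdots b_{\tau_k}]$, and since $\tau_1$ proper in $\sigma$ means $\sigma > \tau_1 > \cdots > \tau_k$ is again a strictly decreasing flag, every cone simplex lands in $\mathcal{F}_{p+1}$ with top element $\sigma$. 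Conversely, a flag-simplex in $\mathcal{F}_{p+1}$ whose top has dimension exactly $p+1$ has the shape $[b_\sigma\,b_{\sigma_2}\cdots b_{\sigma_k}]$ with $\sigma$ a $(p+1)$-simplex and $[b_{\sigma_2}\cdots b_{\sigma_k}]$ a flag-simplex supported in $\operatorname{Bd}(\sigma)$, hence in $\mathcal{L}_\sigma$, so it appears in $b_\sigma \ast \mathcal{L}_\sigma$. The flags of $\mathcal{F}_{p+1}$ with top dimension $\le p$ coincide with $\mathcal{F}_p = \mathcal{L}_p$ by hypothesis. Matching the two sides simplex-by-simplex yields $\mathcal{L}_{p+1} = \mathcal{F}_{p+1}$.

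Two geometric points need attention, and I regard the first as the main obstacle. One must check that for a strictly decreasing flag the barycenters $b_{\sigma_1},\dots,b_{\sigma_k}$ are genuinely affinely independent, so that $[b_{\sigma_1}\cdots b_{\sigma_k}]$ is an honest $(k-1)$-simplex and not a degenerate point set. The inductive framing supplies this almost for free: all the $b_{\tau_i}$ lie in the proper face $\tau_1$, hence in $\operatorname{aff}(\tau_1)$, a proper affine subspace of $\operatorname{aff}(\sigma)$; meanwhile $b_\sigma \in \operatorname{Int}(\sigma)$ has every barycentric coordinate relative to the vertices of $\sigma$ strictly positive, whereas points of $\operatorname{aff}(\tau_1)$ vanish on any vertex of $\sigma$ outside $\tau_1$. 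Thus $b_\sigma \notin \operatorname{aff}([b_{\tau_1}\cdots b_{\tau_k}])$, so adjoining the apex preserves affine independence, and the base simplex is nondegenerate by induction. Second, one should confirm that each cone $b_\sigma \ast \mathcal{L}_\sigma$ and the union forming $\mathcal{L}_{p+1}$ really assemble into a simplicial complex with the stated face structure; rather than re-proving closure under faces and compatibility of overlaps, I would simply invoke the general starring construction preceding the definition together with Lemma~\eqref{l:15.2}.
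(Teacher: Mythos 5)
The paper states this lemma without any proof, so there is no in-paper argument to compare against; your proposal in fact supplies the missing proof, and it is correct. Your route is the natural one given the paper's setup: you run the induction in lockstep with the skeleton-by-skeleton construction of Definition~\ref{d:nth barycentric subdivision of a complex}, identify $\mathcal{L}_{p+1}\setminus\mathcal{L}_p$ with the cone simplices $[b_\sigma\,b_{\tau_1}\cdots b_{\tau_k}]$, and observe that prepending the apex corresponds exactly to extending a flag by the new top element $\sigma$. Your treatment of nondegeneracy is the right one and is the step most often glossed over: since every point of $\operatorname{aff}(\tau_1)$ has zero barycentric coordinate on each vertex of $\sigma$ outside $\tau_1$, while $b_\sigma$ has all coordinates strictly positive, the apex lies off the affine hull of the base, so affine independence propagates through the induction. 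You also correctly read through the statement's typographical slips (the flag should end in $b_{\sigma_n}$, not $b_{\sigma_2}$, and the index $i$ is overloaded in the barycenter formula), interpreting $\operatorname{Sd}(\mathcal{K})$ as $\operatorname{Bsd}(\mathcal{K})$ as the surrounding text intends. The one place where you assert slightly more than you verify is the claim that $\mathcal{L}_\sigma$ consists precisely of the flag-simplices whose top face $\tau_1$ is a proper face of $\sigma$: this needs the observation that a flag-simplex contains $b_{\tau_1}\in\operatorname{Int}(\tau_1)$, so it lies in $|\operatorname{Bd}(\sigma)|$ if and only if $\operatorname{Int}(\tau_1)$ meets $|\operatorname{Bd}(\sigma)|$, which by disjointness of the interiors of distinct simplices of $\mathcal{K}$ forces $\tau_1$ to be a proper face of $\sigma$. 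That is a one-line addition, not a gap in the architecture; with it included, your proof is complete, and invoking Lemma~\ref{l:15.2} together with the starring algorithm for the complex-assembly verification is exactly what the paper's own framework licenses.
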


The objective of next theorem is to guarantee that iterated barycentric subdivision makes every simplex uniformly small in the given metric, so the mesh of the triangulation can be made finer than any prescribed scale $\epsilon$.

\begin{theorem}[Mesh goes to zero under iterated barycentric subdivision]
Given a finite simplicial complex $\mathcal{K}$, a metric on its realization $|\mathcal{K}|$, and $\epsilon>0$, there exists $N\in\mathbb{N}$ such that every simplex $\tau$ of $\operatorname{Bsd}^{N}(\mathcal{K})$ has diameter $\operatorname{diam}(\tau)<\epsilon$.
\end{theorem}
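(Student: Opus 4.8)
The plan is to isolate a single quantitative estimate — that one barycentric subdivision shrinks every simplex by the fixed factor $\frac{n}{n+1}$, where $n=\dim\mathcal{K}$ — and then iterate it geometrically. Since $\mathcal{K}$ is finite, I define its \emph{mesh} $\mu(\mathcal{K})=\max_{\sigma\in\mathcal{K}}\operatorname{diam}(\sigma)$, a maximum over a finite set and hence attained. First I would record the elementary fact that, with respect to the norm on $\mathbb{E}^J$, the diameter of a simplex equals the largest distance between two of its vertices: the map $x\mapsto|x-p|$ is convex, so its maximum over the convex set $\sigma$ is attained at a vertex, and applying this twice gives $\operatorname{diam}(\sigma)=\max_{i,j}|v_i-v_j|$.

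The key geometric lemma I would prove next is that if $\sigma=[v_0,\dots,v_m]$ has barycenter $b_\sigma$, then for every $x\in\sigma$,
\[
|x-b_\sigma|\;\le\;\frac{m}{m+1}\,\operatorname{diam}(\sigma).
\]
The vertex case follows from $b_\sigma=\frac{1}{m+1}\sum_i v_i$ and the triangle inequality, since $|v_j-b_\sigma|=\frac{1}{m+1}\bigl|\sum_{i\ne j}(v_j-v_i)\bigr|\le\frac{m}{m+1}\operatorname{diam}(\sigma)$, and the general $x$ then follows by convexity of $|{\cdot}-b_\sigma|$. Using the flag description of the barycentric subdivision from Lemma~\ref{l:Barcentric subdivision via flags of faces}, an arbitrary simplex of $\operatorname{Bsd}(\mathcal{K})$ has the form $[b_{\sigma_0},\dots,b_{\sigma_k}]$ with $\sigma_0>\sigma_1>\cdots>\sigma_k$. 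For $i<j$ we have $b_{\sigma_j}\in\sigma_j\subset\sigma_i$, so the lemma applied to $\sigma_i$ yields $|b_{\sigma_i}-b_{\sigma_j}|\le\frac{\dim\sigma_i}{\dim\sigma_i+1}\operatorname{diam}(\sigma_i)\le\frac{n}{n+1}\operatorname{diam}(\sigma_0)$, using that $t\mapsto\frac{t}{t+1}$ is increasing and $\operatorname{diam}(\sigma_i)\le\operatorname{diam}(\sigma_0)$. Taking the maximum over vertex pairs gives $\operatorname{diam}[b_{\sigma_0},\dots,b_{\sigma_k}]\le\frac{n}{n+1}\operatorname{diam}(\sigma_0)$, whence $\mu(\operatorname{Bsd}(\mathcal{K}))\le\frac{n}{n+1}\,\mu(\mathcal{K})$.

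Here I would emphasise the point that makes the iteration work: barycentric subdivision does not raise dimension, so $\dim\operatorname{Bsd}^k(\mathcal{K})=n$ for all $k$ and the same factor $\frac{n}{n+1}$ applies at every stage. Hence $\mu(\operatorname{Bsd}^N(\mathcal{K}))\le\bigl(\frac{n}{n+1}\bigr)^N\mu(\mathcal{K})$, which tends to $0$; choosing $N$ with $\bigl(\frac{n}{n+1}\bigr)^N\mu(\mathcal{K})<\epsilon$ settles the theorem when diameter is measured in the ambient norm.

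Finally, to pass from the ambient norm to an arbitrary metric $d$ on $|\mathcal{K}|$ inducing the subspace topology, I would invoke compactness: $|\mathcal{K}|$ is compact (finite complex), so the identity from $(|\mathcal{K}|,|\cdot|)$ to $(|\mathcal{K}|,d)$ is uniformly continuous, producing $\delta>0$ with $|x-y|<\delta\Rightarrow d(x,y)<\epsilon$. Applying the norm estimate with $\epsilon$ replaced by $\delta$ gives an $N$ for which every simplex of $\operatorname{Bsd}^N(\mathcal{K})$ has $|\cdot|$-diameter $<\delta$, hence $d$-diameter $<\epsilon$. I expect the main obstacle to be the geometric lemma bounding $|x-b_\sigma|$ and, relatedly, verifying that the shrink factor is governed by $n=\dim\mathcal{K}$ (via monotonicity of $t/(t+1)$ and the nesting $\sigma_j\subset\sigma_i$) rather than by the dimension of each subdivided cell; the metric-reduction step is routine once compactness is used.
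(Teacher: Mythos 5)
Your proof is correct and follows essentially the same route as the paper: one barycentric subdivision contracts the mesh by the dimension-dependent factor $\tfrac{n}{n+1}$ (via the flag description of $\operatorname{Bsd}$ and the barycenter estimate $|x-b_\sigma|\le\tfrac{m}{m+1}\operatorname{diam}(\sigma)$), and iterating this geometric decay yields the required $N$. You additionally make rigorous two points the paper leaves implicit --- a detailed proof of the contraction estimate, where the paper only invokes ``a standard estimate,'' and the compactness/uniform-continuity transfer from the ambient norm to an arbitrary compatible metric $d$, a step the paper's proof silently omits even though its statement allows any metric on $|\mathcal{K}|$.
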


\begin{proof}
As given  a finite simplicial complex $\mathcal{K}$, a metric $d$ on the geometric realization $|\mathcal{K}|$ and tolerance $\epsilon>0$. We want to find an integer $N$ so that in the $N$-fold barycentric subdivision $\operatorname{Bsd}^{N}(\mathcal{K})$, every simplex has $d$-diameter less than $\epsilon$.

Let $\mathrm{mesh}(\mathcal{L}) := \max\{\operatorname{diam}(\sigma): \sigma\in \mathcal{L}\}$ denote the mesh (maximal simplex diameter) of a geometric simplicial complex $\mathcal{L}$ with respect to the given metric $d$.
Because $\mathcal{K}$ is finite, $\mathrm{mesh}(\mathcal{K})<\infty$.

It suffices to prove that there exists a constant $c\in(0,1)$, depending only on the dimension $n:=\dim\mathcal{K}$, such that
\[
\mathrm{mesh}\bigl(\operatorname{Bsd}(\Delta)\bigr) \;\le\; c\,\mathrm{mesh}(\Delta)
\quad\text{for every simplex }\Delta\text{ of dimension }\le n.
\]
Granting this, an induction over the simplices of $\mathcal{K}$ yields
\[
\mathrm{mesh}\bigl(\operatorname{Bsd}(\mathcal{K})\bigr) \;\le\; c\,\mathrm{mesh}(\mathcal{K}),
\]
and iterating,
\[
\mathrm{mesh}\bigl(\operatorname{Bsd}^{m}(\mathcal{K})\bigr)
\;\le\; c^{\,m}\,\mathrm{mesh}(\mathcal{K}) \;\xrightarrow[m\to\infty]{}\; 0.
\]
Given $\epsilon>0$, choose $N$ with $c^{N}\mathrm{mesh}(\mathcal{K})<\epsilon$; then every simplex $\tau$ of $\operatorname{Bsd}^{N}(\mathcal{K})$ satisfies $\operatorname{diam}(\tau)\le \mathrm{mesh}(\operatorname{Bsd}^{N}(\mathcal{K}))<\epsilon$.

It remains to justify the contraction factor $c\in(0,1)$.
For an $n$-simplex $\Delta$ in an affine/Euclidean model, each simplex $\Delta'$ appearing in $\operatorname{Bsd}(\Delta)$ is a convex hull of barycenters of a flag of faces of $\Delta$.
A standard estimate shows that
\[
\operatorname{diam}(\Delta') \;\le\; \frac{n}{n+1}\,\operatorname{diam}(\Delta),
\]
since every vertex of $\Delta'$ is an average of vertices of $\Delta$ with coefficients summing to $1$, and successive averaging contracts distances by at least the factor $n/(n+1)$.
Thus one may take $c = n/(n+1) < 1$ uniformly for all $n$-simplices, and the same bound applies to lower-dimensional faces (with even stronger contraction).
This proves the claim.
\end{proof}

\section{Conclusion}\label{s:Conclusion}
In this paper, we have thoroughly explored the theory and methodology of subdivisions of simplicial complexes, focusing particularly on barycentric subdivisions. We have presented clear definitions, detailed examples, and rigorous proofs to demonstrate how iterated barycentric subdivision refines the complex, making all simplices arbitrarily small with respect to any given metric. This property is crucial for applications in topology and geometry such as simplicial approximation and mesh refinement in computational geometry. The illustrative diagrams provided shed light on the stepwise process of subdivision and help visualize the intricate structure of the resulting complexes. Overall, our exposition aims to provide both theoretical insights and practical tools for researchers working with simplicial complexes and their refinements.

For readers interested in exploring the theory and applications of subdivisions of simplicial complexes in greater depth, we recommend consulting foundational texts such as Munkres’ \emph{Elements of Algebraic Topology} \cite{munkres1984elements} and Spanier’s \emph{Algebraic Topology} \cite{spanier1981algebraic}. For a modern perspective on subdivisions in the context of polytopes and higher-dimensional complexes, Ziegler’s \emph{Lectures on Polytopes} \cite{Ziegler2012} is an excellent resource. In 1988, Margaret \cite{margaret1988} characterized simplicial polytopes that are barycentric subdivisions of the regular $CW$ sphere. In 2010, Murai studied  face vectors of barycentric subdivisions of simplicial homology manifolds \cite{murai2010}. In 2021, Athanasiadis discussed about the face number of barycentric subdivisions of cubical complexes in \cite{Athanasiadis2021}. These references offer extensive theoretical background, examples, and applications that complement and deepen the material presented in this paper.

\bibliographystyle{unsrtnat}
\bibliography{MainBibFile}
\end{document}